\documentclass[11pt,a4paper]{article}

\usepackage{graphicx,latexsym,euscript,makeidx,color,bm}
\usepackage{amsmath,amsfonts,amssymb,amsthm,thmtools,mathtools,mathrsfs,enumerate}
\usepackage[colorlinks,linkcolor=blue,anchorcolor=green,citecolor=red]{hyperref}
\usepackage{bm}

\usepackage{tikz}
\usetikzlibrary{fit,calc,positioning} 
\tikzstyle{Block}=[rectangle,minimum width=3cm,minimum height=1cm,text centered,text width=5.2cm,draw=black]
\tikzstyle{Implication}=[rectangle,minimum width=3cm,minimum height=1cm,text centered,text width=5.2cm]
\tikzstyle{jian}=[<->, >=stealth]
\usetikzlibrary{positioning, arrows.meta, calc}

\usepackage{geometry}
\allowdisplaybreaks[4]

  \def\cA{{\cal A}}   \def\bA{\bar{A}} \def\hA{\widehat{A}} 
  \def\cB{{\cal B}}   \def\bB{\bar{B}} \def\hB{\widehat{B}} 
\def\dbC{\mathbb{C}}     \def\bC{\bar{C}} \def\hC{\widehat{C}} 
      
\def\dbE{\mathbb{E}}    
\def\dbF{\mathbb{F}} \def\sF{\mathscr{F}}

  \def\cK{{\cal K}}  
  \def\cL{{\cal L}}  
    
\def\dbN{\mathbb{N}}  \def\cN{{\cal N}}  
  \def\cO{{\cal O}}  
\def\dbP{\mathbb{P}}    
    
\def\dbR{\mathbb{R}}  \def\cR{{\cal R}}  
\def\dbS{\mathbb{S}}    
    
\def\dbU{\mathbb{U}}

\def\lt{\left}
\def\hb{\hbox}
\def\ms{\medskip}
\def\rt{\right}

        \def\lan{\langle}    \def\as{\hb{a.s.}}
   \def\ran{\rangle}    \def\tr{\hb{tr$\,$}}
         
\def\no{\noindent}          
\def\hp{\hphantom}         
         
\def\rf{\eqref}            
\def\cd{\cdot}             
\def\deq{\triangleq}     \def\({\Big (}       \def\ba{\begin{aligned}}
\def\les{\leqslant}      \def\){\Big )}       \def\ea{\end{aligned}}
\def\ges{\geqslant}      \def\[{\Big[}        \def\bel{\begin{equation}\label}
          \def\]{\Big]}        \def\ee{\end{equation}}
      \def\q{\quad}        
         \def\qq{\qquad}      

  \def\G{\varGamma}      \def\Om{\varOmega}  
\def\b{\beta}   \def\D{\varDelta}   \def\d{\delta}   \def\F{\varPhi}     
\def\z{\zeta}     \def\th{\theta}    \def\si{\sigma}
   \def\l{\lambda}           
    \def\i{\infty}         
\def\bp{\begin{pmatrix}}
\def\ep{\end{pmatrix}}

       \def\lt{\left}          \def\hb{\hbox}
\def\ms{\medskip}     \def\rt{\right}         
          \def\lan{\langle}       \def\as{\text{a.s.}}
\def\q{\quad}         \def\ran{\rangle}       \def\tr{\hb{tr$\,$}}
\def\qq{\qquad}             
\def\no{\noindent}          
\def\hp{\hphantom}         
         
\def\rf{\eqref}        
\def\cd{\cdot}         
\def\deq{\triangleq}  \def\({\Big(}           \def\les{\leqslant}
       \def\){\Big)}           \def\ges{\geqslant}
   \def\[{\Big[}           
     \def\]{\Big]}

\DeclareMathOperator{\im}{im}

\DeclareMathOperator{\rank}{rank}
\newtheoremstyle{thry}
{}      
{}      
{\sl}   
{}      
{\bf}   
{.}     
{.5em}  
{}      

\theoremstyle{thry}

\newtheorem{theorem}{Theorem}[section]
\newtheorem{proposition}[theorem]{Proposition}
\newtheorem{corollary}[theorem]{Corollary}
\newtheorem{lemma}[theorem]{Lemma}

\theoremstyle{definition}

\newtheorem{example}[theorem]{Example}

\theoremstyle{remark}
\newtheorem{remark}[theorem]{Remark}

\makeatletter
   
   \@addtoreset{equation}{section}
   \newcommand{\setword}[2]{%
   \phantomsection
   #1\def\@currentlabel{\unexpanded{#1}}\label{#2}%
   }
\makeatother



\begin{document} 

\title{\bf Exact Controllability of Backward-Structured Mean-Field SDEs: 
Hautus Criteria and Initial-Time Dichotomy} 

\author{
Jingrui Sun\thanks{
Department of Mathematics and SUSTech International Center for Mathematics,
Southern University of Science and Technology, 
Shenzhen, Guangdong, 518055, China (Email: sunjr@sustech.edu.cn).
This author is supported by NSFC grants 12322118 and 12271242, 
and by Shenzhen Science and Technology Program grant JCYJ20250604144337051.}
~~~
Lvning Yuan\thanks{School of Mathematics and Computing Science,
Guangxi Colleges and Universities Key Laboratory of Data Analysis and
Computation, Guilin University of Electronic Technology, Guilin, 541004,
China (Email: yuanln@guet.edu.cn).}
~~~
Xurun Zuo\thanks{Corresponding author. Department of Mathematics, Southern University of Science and Technology, 
Shenzhen, 518055, China (Email: 12331006@mail.sustech.edu.cn).}
}

\maketitle

\no{\bf Abstract.}
This paper studies exact controllability of linear mean-field stochastic 
differential equations with backward-structure. 
We establish Hautus criteria and uncover a sharp dichotomy between the 
zero initial time and positive initial times. 
At time zero, exact controllability is characterized by a Hautus condition 
for the mean dynamics, with additional control directions generated by the 
centered stochastic dynamics. 
At any positive initial time, exact controllability further requires a 
stochastic Hautus condition for the centered system. 
Unlike the classical eigenvector-based condition, this stochastic criterion 
is formulated in terms of positive-semidefinite eigenmatrices of an associated 
Lyapunov-type operator. 
The dichotomy arises from the triviality of the initial sigma-field. 
We also clarify the controllability relations among the corresponding ODE, SDE, 
and mean-field SDE systems.

\ms
\no{\bf Key words.}
Backward-structured mean-field SDEs, exact controllability, Hautus criteria, 
initial-time dichotomy, controllability Gramian, positive-semidefinite eigenmatrices.

\ms
\no{\bf MSC codes.} 93B05, 93E03, 60H10.

\section{Introduction}\label{Sec:Intro}   

Let $(\Om,\sF,\dbP)$ be a complete probability space on which a standard one-dimensional
Brownian motion $W=\{W(t);t\ges0\}$ is defined, and let $\dbF=\{\sF_t\}_{t\ges0}$ be
the usual augmentation of the natural filtration generated by $W$.
For $0\les t<T$, denote by $L_{\sF_t}^2(\Om;\dbR^n)$ the space of $\sF_t$-measurable, 
square-integrable $\dbR^n$-valued random variables, and by $L_{\dbF}^2(t,T;\dbR^m)$ the 
space of $\dbF$-progressively measurable, square-integrable $\dbR^m$-valued processes 
on $[t,T]$. Consider the following controlled linear mean-field stochastic differential 
equation (MF-SDE, for short):
\begin{equation}\label{eq:system}\left\{\begin{aligned}
dX(s)&=[AX(s)+\bA\dbE X(s)+Bu(s)+\bB\dbE u(s) \\
     &\hp{=\ } +Cz(s)+\bC\dbE z(s)]ds+z(s)dW(s), \q s\in[t,T],\\
 X(t)&=\eta,
\end{aligned}\right.\end{equation}
where the coefficients 
$$
A,\bA,C,\bC\in\dbR^{n\times n}, \q B,\bB\in\dbR^{n\times m}
$$
are constant matrices. 
Denote the solution of system \rf{eq:system} by $ X(\cd\,;t,\eta,u,z)$. 
We say that system \eqref{eq:system} is {\it exactly controllable} 
over $[t,T]$ if, for every 
$$
\eta\in L^2_{\sF_t}(\Om;\dbR^n) \q\hbox{and}\q \xi\in L^2_{\sF_T}(\Om;\dbR^n),
$$
there exists a control pair $(u(\cd),z(\cd))\in L_{\dbF}^2(t,T;\dbR^m)\times L_{\dbF}^2(t,T;\dbR^n)$ 
such that the corresponding solution satisfies
$$
X(T;t,\eta,u,z)=\xi,\q\as
$$
We call $(t,\eta)$ the {\it initial pair}, $(T,\xi)$ the {\it terminal pair}, and $(\eta,\xi)$ the {\it initial-terminal pair}.

\ms 

A distinctive feature of this formulation is that the role of the initial time is not merely
to determine the length of the control interval. Indeed, the state can be decomposed into its
mean and centered parts:
$$
X(s)=\dbE X(s)+[X(s)-\dbE X(s)], \q s\in[t,T].
$$
Under the usual augmentation of the Brownian filtration, $\sF_0$ is trivial, and hence every
$\sF_0$-measurable square-integrable initial state is deterministic. In contrast, when $t>0$,
the initial state may have an arbitrary centered random component. This raises a natural
question: are the exact controllability conditions at the zero initial time and at positive
initial times the same? One of the main findings of this paper is that they are not. The two
cases are governed by different Hautus conditions, leading to a sharp initial-time dichotomy.

\ms 

For deterministic linear systems, controllability is classically characterized by the Kalman
rank condition and the Hautus criterion \cite{hautus}. In stochastic systems, exact
controllability is more subtle because the target is random, the controls must be adapted,
and the diffusion term may generate additional reachable directions. The exact controllability
of backward-structured linear SDEs was initiated by Peng \cite{peng1994}, who obtained a
Kalman-type rank condition for systems with constant coefficients. Liu and Peng
\cite{liu2010} extended the analysis to bounded time-varying deterministic coefficients.
Wang, Yang, Yong, and Yu \cite{wang2017} introduced $L^p$-exact controllability for linear
SDEs with random coefficients and established BSDE-based observability criteria together with
several equivalent characterizations of exact and null controllability. Bi, Sun, and Xiong
\cite{bi2020} characterized the controllability of prescribed state pairs. 
Related problems
have also been studied through controllability operators and stochastic reachability
\cite{ehrhardt1982,mahmudov2000,zabczyk1981}.
Furthermore, Buckdahn, Quincampoix, and Tessitore \cite{buckdahn2006} characterized approximate controllability in terms of dual BSDEs and invariant subspaces, while Goreac \cite{goreac2008} obtained a Kalman-type condition.
Wang and Yu \cite{wang2020} studied partial controllability of SDEs together with exact
controllability of FBSDEs. 
More recently, Sun \cite{sun2026} established a
Hautus criterion for the exact controllability of the linear SDE
\bel{eq=0}\left\{\begin{aligned}
dX(s) &= [AX(s)+Bu(s)+Cz(s)]ds+z(s)dW(s),\\
 X(0) &= x\in\dbR^n.
\end{aligned}\right.\ee
A notable feature of that criterion is that the stochastic obstruction is described 
by positive-semidefinite eigenmatrices of a Lyapunov-type operator, rather than by 
eigenvectors of the drift matrix alone.

\ms  

Controllability of linear mean-field stochastic systems has also attracted considerable attention. 
Goreac \cite{goreac2014} studied approximate controllability and obtained necessary conditions for 
exact controllability. 
For constant coefficients, Yu \cite{yu2021} established controllability Gramian and Kalman-type rank 
characterizations for system \eqref{eq:system}.
Ye and Yu \cite{ye2020} investigated exact controllability for linear mean-field stochastic
systems with time-varying random coefficients. 
Chen and Yu \cite{chen2024} studied exact controllability for mean-field type linear game-based 
control systems and obtained Gramian-type and Kalman-type criteria. 
Goreac, Li, and Zhang \cite{goreac2026} studied controllability of linear mean-field stochastic systems with reduced-rank coefficients, obtaining exact controllability results with less regular controls and introducing exact terminal controllability to Gaussian laws.
Controllability of mean-field systems has also been investigated for stochastic evolution equations with delay \cite{mahmudov2006} and stochastic integrodifferential equations in Hilbert spaces \cite{park2008}, as well as at the level of terminal probability distributions through nonlinear Fokker–Planck equations \cite{barbu2023}.
These works provide important operator, Gramian, and rank characterizations, as well as sufficient controllability conditions for stochastic evolution equations.
However, to the best of our knowledge, no Hautus criterion has been established for 
backward-structured linear mean-field stochastic systems. 
In particular, the effect of the initial sigma-field on exact controllability has not been isolated.

\ms 

The purpose of this paper is to establish Hautus criteria for system \eqref{eq:system} and to identify the precise difference between the zero initial time and positive initial times. Set
$$
\hA=A+\bA, \q \hB=B+\bB, \q \hC=C+\bC.
$$
The main results are summarized as follows.

\ms 

(i) We first study exact controllability at the zero initial time. 
Our main result is the Hautus criterion established in \autoref{thm:criterion-U}. 
Let $P$ be any full-column-rank matrix whose columns form a basis of the smallest 
subspace containing $\im B$ and invariant under both $A$ and $C$. 
Then exact controllability is equivalent to
$$
\hA^{\,\top} x=\l x,\q \hB^\top x=0, \q P^\top\hC^\top x=0  \q\Longrightarrow\q  x=0,
$$
for every $\l\in\dbC$ and $x\in\dbC^n$. 
By the classical Hautus criterion, this spectral condition is equivalent to controllability 
of the augmented deterministic pair
$$
\big[\hA,(\hB,\hC P)\big].
$$
Thus, through the term $\hC P$, the reachable directions of the centered stochastic system 
provide additional control directions for the mean dynamics. 
Consequently, the mean-field system may be exactly controllable at time zero even when the 
centered stochastic system itself is not.

\ms

(ii) We next study exact controllability at positive initial times. 
Our main result is the Hautus criterion established in \autoref{thm:controllability-t>0}. 
It shows that exact controllability on one, and hence every, interval $[t,T]$ with $t>0$ 
is equivalent to the simultaneous validity of two spectral conditions. First,
$$
B^\top H\neq0
$$
for every nonzero positive-semidefinite eigenmatrix $H$ of the Lyapunov-type operator
$$
\cL_{(-A,C)}(H) = -HA-A^\top H+C^\top HC.
$$
Second, for every $\l\in\dbC$ and $x\in\dbC^n$,
$$
\hA^{\,\top} x=\l x, \q \hB^\top x=0, \q \hC^\top x=0 \q\Longrightarrow\q x=0. 
$$
Thus, the positive-time criterion combines a stochastic Hautus condition formulated 
in terms of positive-semidefinite eigenmatrices with a classical eigenvector condition 
for the mean dynamics. 

\ms 

Comparison with the zero-time criterion reveals a sharp initial-time dichotomy. 
At time zero, the reachable directions of the centered system only provide additional 
effective control directions for the mean dynamics. 
At any positive initial time, however, the centered stochastic system must itself be 
exactly controllable. 
This distinction results from the triviality of $\sF_0$, rather than from the length 
of the control interval.

\ms

(iii) Finally, we establish several supporting characterizations and compare the controllability 
properties of the associated systems. 
At the zero initial time, we characterize the transferability of prescribed initial--terminal 
state pairs, recover the known Gramian criterion, and give a concise proof of the finite-dimensional 
Kalman-type condition; see \autoref{thm:controllability-characterization}, 
\autoref{coro:G(T)-controllability}, and \autoref{thm:G(t,T)}. 
At positive initial times, we derive equivalent formulations in terms of zero initial or 
terminal states, together with the corresponding operator and Gramian characterizations; 
see \autoref{prop:equivalence-controllability} and \autoref{thm:MF-controllability-t-T}. 
We further show that, unlike the mean-field system, exact controllability of the corresponding 
non-mean-field SDE is independent of whether the initial time is zero or positive; 
see \autoref{prop:SDE-controllability}. 
Finally, we determine the exact controllability relationships among the MF-SDE \eqref{eq:system}, 
the corresponding SDE \eqref{eq:system-SDE}, and the associated ODE \eqref{eq:system-ODE}; 
see \autoref{coro:hA-hB}, \autoref{coro:MF-SDE-t>0-SDE}, and \autoref{thm:relation}. 
The valid implications and the failure of several natural converses are illustrated by explicit 
examples and summarized in Figure \ref{fig:summary}.

\ms

The rest of the paper is organized as follows. 
In \autoref{sec:t=0}, we study exact controllability at the zero initial time, beginning with 
transferability and Gramian characterizations and then deriving the finite-dimensional and Hautus 
criteria. In \autoref{sec:controllability-t>0}, we treat positive initial times, establish the 
operator and Gramian characterizations, and prove the second Hautus criterion together with the 
initial-time dichotomy. Finally, \autoref{sec:relationship} compares the exact controllability 
properties of the associated ODE, SDE, and MF-SDE systems.

\section{Exact Controllability at the Zero Initial Time}\label{sec:t=0}

Throughout the paper, $I_n$ denotes the $n\times n$ identity matrix. 
We use $M^\top$, $\tr(M)$, and $\im M$ for the transpose, trace, and range of a
matrix $M$, respectively.   
For a subspace $V\subset\dbR^n$, $V^\perp$ denotes its orthogonal complement.
We write $\dbS^n$ for the space of symmetric real $n\times n$ matrices,
$\dbS_+^n$ for its positive-definite cone, and $\bar\dbS_+^n$ for its
positive-semidefinite cone. The symbols $\dbN$ and $\dbN_+$ denote
$\{0,1,2,\ldots\}$ and $\{1,2,\ldots\}$, respectively.
For $0\les t<T$, set
$$
\dbU[t,T]\deq L_{\dbF}^2(t,T;\dbR^m)\times L_{\dbF}^2(t,T;\dbR^n),
$$
equipped with the usual product Hilbert space structure.
Standard well-posedness results for linear MF-SDEs and MF-BSDEs ensure that
all equations considered below admit unique square-integrable adapted
solutions. The letter $K$ denotes a generic positive constant that may vary
from line to line.

\ms 

On $\dbR^{n\times m}$, we use the Frobenius inner product and norm
$$
\lan M,N\ran\deq\tr(M^\top N),
\q
|M|\deq\lan M,M\ran^{1/2}.
$$
The same notation $\lan\cd,\cd\ran$ and $|\cd|$ is used for the inner
product and induced norm on all Hilbert spaces under consideration, with
the precise meaning determined by the context. In particular, the inner
products on $L_{\sF_t}^2(\Om;\dbR^n)$ and
$L_{\dbF}^2(t,T;\dbR^m)$ are
$$
\dbE\lan\xi,\eta\ran
\q\hbox{and}\q
\dbE\int_t^T\lan u(s),v(s)\ran ds,
$$
respectively. Finally, for a bounded linear operator $\cA$ between Hilbert
spaces, $\cA^*$ denotes its adjoint.

\subsection{Transferability and the Gramian characterization}

We first study exact controllability of system \eqref{eq:system} with initial time $t=0$.
Let $\F_0(\cd)=\{\F_0(t);t\ges0\}$ be the solution of the following matrix-valued linear MF-SDE:
\bel{eq:Phi}\lt\{\begin{aligned}
d\F_0(t)&= -\big\{\F_0(t)A+[\dbE\F_0(t)]\bA\big\}dt
           -\big\{\F_0(t)C+[\dbE\F_0(t)]\bC\big\}dW(t), \q t\ges0, \\
 \F_0(0)&= I_n.
\end{aligned}\rt.\ee
Standard estimates for linear matrix SDEs yield
$$
\dbE\sup_{0\les s\les T}|\F_0(s)|^2<\i, \q\forall T>0,
$$
and taking expectations in \eqref{eq:Phi} gives
$$
\dbE\F_0(s)=e^{-s\hA},\q s\in[0,T],
$$
which is invertible. Based on $\F_0(\cd)$, we define a symmetric matrix
\begin{equation}\label{eq:Gramian}
G(0,T)\deq\dbE\int_0^T\(\F_0(t)B+\dbE[\F_0(t)]\bB\)
                    \(B^{\top}\F_0(t)^{\top}+\bB^\top\dbE[\F_0(t)^\top]\)dt.
\end{equation}
The matrix \eqref{eq:Gramian} can be regarded as a mean-field controllability Gramian.

\ms

The following duality identity, expressed in terms of the fundamental matrix $\F_0(\cd)$, 
will be used to characterize the transferability of prescribed initial--terminal state 
pairs and to derive the associated controllability Gramian.
 
\begin{proposition}\label{prop:EPhiY}
Let $f(\cd)\in L_{\dbF}^2(0,T;\dbR^n)$ and $\xi\in L_{\sF_T}^2(\Om;\dbR^n)$.
Then the adapted solution $(Y(\cd),Z(\cd))$ of the mean-field backward SDE
\begin{equation}\label{eq:EPhiY}\lt\{\begin{aligned}
dY(t) &=[AY(t)+\bA\dbE Y(t)+CZ(t)+\bC\dbE Z(t)+f(t)]dt+Z(t)dW(t),\\
 Y(T) &=\xi
\end{aligned}\rt.\end{equation}
satisfies
$$
\dbE[\F_0(t)Y(t)]=\dbE\lt[\F_0(T)\xi-\int_t^T\F_0(s)f(s)ds\rt],\q\forall t\in[0,T],
$$
where $\F_0(\cd)$ is the solution of \eqref{eq:Phi}.
\end{proposition}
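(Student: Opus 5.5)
The plan is to apply It\^o's product rule to $\F_0(s)Y(s)$ on $[t,T]$, take expectations, and show that all mean-field terms cancel after expectation. By \eqref{eq:Phi} and \eqref{eq:EPhiY},
$$
d[\F_0 Y] = (d\F_0)Y + \F_0\, dY + d\lan \F_0, Y\ran ,
$$
where the cross-variation term is $-\{\F_0C+[\dbE\F_0]\bC\}Z\,ds$. Collecting the $ds$ terms, the integrand is
$$
-\F_0AY-[\dbE\F_0]\bA Y+\F_0AY+\F_0\bA\dbE Y+\F_0CZ+\F_0\bC\dbE Z+\F_0 f-\F_0CZ-[\dbE\F_0]\bC Z .
$$
The terms $\F_0AY$ and $\F_0CZ$ cancel pathwise. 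The remaining mean-field terms do not cancel pathwise, but they do after expectation:
$$
\dbE\{\F_0\bA\dbE Y\}=[\dbE\F_0]\bA\,\dbE Y=\dbE\{[\dbE\F_0]\bA Y\},
$$
and the same holds with $\bC$ and $Z$. Hence the expected drift reduces to $\dbE[\F_0(s)f(s)]$.

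Next I will justify discarding the stochastic integrals. These are $\int \F_0\, Z\,dW$, $\int [\dbE\F_0]\bA\ldots$-type terms, namely $\int\{\F_0C+[\dbE\F_0]\bC\}Y\,dW$ and $\int\F_0 Z\,dW$. Their integrands are products of $\F_0$ with $Y$ or $Z$. Since $\dbE\sup|\F_0|^2<\i$ and $\dbE\sup|Y|^2<\i$, the product $\F_0 Y$ is only $L^1$ in sup norm, so the integrands need not be square integrable. I will handle this with a localization argument. Let $\tau_k=\inf\{s\ges t:|\F_0(s)|+|Y(s)|\ges k\}\wedge T$. The stopped integrals are genuine martingales, since $\int |Z|^2<\i$ in expectation and the other factor is bounded by $k$, so their expectations vanish. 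This gives
$$
\dbE[\F_0(\tau_k)Y(\tau_k)]-\dbE[\F_0(t)Y(t)]=\dbE\int_t^{\tau_k}\F_0 f\,ds+(\text{mean-field drift terms up to }\tau_k).
$$
There is a subtlety here: after stopping, the mean-field cancellation uses $\dbE[\F_0\bA\dbE Y\,\mathbf 1_{s<\tau_k}]$, which no longer equals $\dbE[[\dbE\F_0]\bA Y\,\mathbf 1_{s<\tau_k}]$. To avoid this, I will keep the full drift in the stopped identity and pass to the limit $k\to\i$ first, cancelling only afterwards. Dominated convergence applies because every drift term is bounded by $\sup|\F_0|\cdot(|Y|+|Z|+|f|+|\dbE Y|+|\dbE Z|)$, which is integrable by Cauchy--Schwarz. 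Likewise $\F_0(\tau_k)Y(\tau_k)\to\F_0(T)\xi$ in $L^1$, dominated by $\sup|\F_0|\sup|Y|\in L^1$.

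In the limit this yields
$$
\dbE[\F_0(T)\xi]-\dbE[\F_0(t)Y(t)]=\dbE\int_t^T\F_0(s)f(s)ds,
$$
which rearranges to the claimed formula. The main obstacle is precisely this integrability and localization step, together with the noncommutation of stopping and the mean-field cancellation. Ordering the limit before the cancellation, as above, resolves it. Alternatively, one can note that $\dbE\int|\F_0|^2|Z|^2<\i$ holds by $L^p$ moment bounds on the linear equation for $\F_0$ with $p>2$, but the localization route requires only the square-moment estimates already recorded.
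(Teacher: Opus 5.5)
Your proof is correct and follows the paper's route: It\^o's product rule for $\F_0Y$, taking expectations, and cancelling $\F_0\bA\dbE Y-[\dbE\F_0]\bA Y$ and $\F_0\bC\dbE Z-[\dbE\F_0]\bC Z$ only after integration. The one difference is in handling the stochastic integral: the paper bounds $\dbE\big(\int_0^T|\F_0(Z-CY)-[\dbE\F_0]\bC Y|^2ds\big)^{1/2}$ by Cauchy--Schwarz, so by Burkholder--Davis--Gundy the local martingale is a true martingale, whereas your localization, with $k\to\i$ taken before the mean-field cancellation, is an equally valid substitute.

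Do drop your closing ``alternative'' remark, though: $Z$ is only in $L^2_{\dbF}(0,T;\dbR^n)$, so moment bounds on $\F_0$ cannot give $\dbE\int_0^T|\F_0|^2|Z|^2ds<\i$, and this quantity can in fact be infinite for suitable $\xi\in L^2_{\sF_T}(\Om;\dbR^n)$ already when $n=1$, $A=\bA=\bC=0$, $C=c\ne0$ (finiteness would force $\xi$ to be square-integrable under the measure with density $e^{-2cW(T)-2c^2T}$).
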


\begin{proof}
Applying It\^o's formula to $\F_0Y$ yields
$$
d(\F_0Y)=[-(\dbE\F_0)\bA Y+\F_0\bA\dbE Y+\F_0\bC\dbE Z-(\dbE\F_0)\bC Z+\F_0f]dt+\G dW,
$$
where
$$
\G(t)\deq\F_0(t)[Z(t)-CY(t)]-[\dbE\F_0(t)]\bC Y(t).
$$
We first justify that the stochastic integral has zero expectation. 
By the standard estimates for $\F_0$, $Y$, and $Z$, together with the Cauchy--Schwarz inequality,
\begin{align*}
\dbE\lt(\int_0^T|\G(s)|^2ds\rt)^{1/2}
&\les \dbE\lt[\sup_{0\les s\les T}|\F_0(s)|\lt(\int_0^T|Z(s)-CY(s)|^2ds\rt)^{1/2}\rt]  \\
&\hp{=\ } +\sup_{0\les s\les T}\big|[\dbE\F_0(s)]\bC\big|\,\dbE\lt(\int_0^T|Y(s)|^2ds\rt)^{1/2}  \\
&\les\lt(\dbE\sup_{0\les s\les T}|\F_0(s)|^2\rt)^{1/2}\lt(\dbE\int_0^T|Z(s)-CY(s)|^2ds\rt)^{1/2} \\
&\hp{=\ } +K\lt(\dbE\sup_{0\les s\les T}|Y(s)|^2\rt)^{1/2}<\i.
\end{align*}
Hence,
$$
M(t)\deq\int_0^t\G(s)dW(s),\q t\in[0,T],
$$
is a martingale. Integrating the It\^o identity from $t$ to $T$ and taking expectations, 
we obtain
\begin{align*}
\dbE[\F_0(T)\xi]-\dbE[\F_0(t)Y(t)]
&=\dbE\int_t^T\[-(\dbE\F_0)\bA Y+\F_0\bA\dbE Y +\F_0\bC\dbE Z-(\dbE\F_0)\bC Z+\F_0f\]ds.
\end{align*}
Since $\dbE\F_0$, $\dbE Y$, and $\dbE Z$ are deterministic,
\begin{align*}
\dbE\big[-(\dbE\F_0)\bA Y+\F_0\bA\dbE Y\big] &= -(\dbE\F_0)\bA\dbE Y+(\dbE\F_0)\bA\dbE Y=0, \\
\dbE\big[\F_0\bC\dbE Z-(\dbE\F_0)\bC Z\big]  &=  (\dbE\F_0)\bC\dbE Z-(\dbE\F_0)\bC\dbE Z=0.
\end{align*}
Therefore,
$$
\dbE[\F_0(T)\xi]-\dbE[\F_0(t)Y(t)]=\dbE\int_t^T\F_0(s)f(s)ds, \q t\in[0,T],
$$
which proves the desired result.  
\end{proof}

Throughout this section, the initial time is fixed at $t=0$. 
Accordingly, for $x\in\dbR^n$ and a control pair $(u(\cd),z(\cd))\in\dbU[0,T]$, 
we write
$$
X(\cd\,;x,u,z)\deq X(\cd\,;0,x,u,z),
$$
where $X(\cd\,;0,x,u,z)$ denotes the solution of the state equation \rf{eq:system} 
with initial condition $X(0)=x$ and control processes $(u,z)$. 
The following theorem characterizes when a prescribed initial--terminal 
state pair $(x,\xi)$ can be connected at the zero initial time.

\begin{theorem}\label{thm:controllability-characterization}
Let $(x,\xi)\in\dbR^n\times L_{\sF_T}^2(\Om;\dbR^n)$.
Then there exists a control pair $(u(\cd),z(\cd))\in\dbU[0,T]$ such that
$$
X(T;x,u,z)=\xi,\q\as
$$
if and only if
$$
x-\dbE[\F_0(T)\xi]\in\im G(0,T),
$$
where $\F_0(\cd)$ and $G(0,T)$ are defined by \rf{eq:Phi} and \rf{eq:Gramian}, respectively.
\end{theorem}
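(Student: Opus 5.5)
The plan is to recast the state equation as a mean-field BSDE and apply Proposition~\ref{prop:EPhiY}. Given $(u,z)\in\dbU[0,T]$ with $X(T;x,u,z)=\xi$, set $Y=X$, $Z=z$ and
$$
f=Bu+\bB\dbE u.
$$
Then $(Y,Z)$ solves \rf{eq:EPhiY}, and Proposition~\ref{prop:EPhiY} at $t=0$, with $\F_0(0)=I_n$ and $Y(0)=x$ deterministic, gives
$$
x=\dbE[\F_0(T)\xi]-\dbE\int_0^T\F_0(s)\big(Bu(s)+\bB\dbE u(s)\big)ds.
$$
Since $\dbE[\F_0(s)\bB\dbE u(s)]=(\dbE\F_0(s))\bB\dbE u(s)=\dbE[(\dbE\F_0(s))\bB u(s)]$, this reads
$$
x-\dbE[\F_0(T)\xi]=-\cL u,\qquad \cL u\deq\dbE\int_0^T\Psi(s)u(s)ds,\qquad \Psi(s)\deq\F_0(s)B+[\dbE\F_0(s)]\bB .
$$
Here $\cL:L^2_\dbF(0,T;\dbR^m)\to\dbR^n$ is bounded linear, and $G(0,T)=\dbE\int_0^T\Psi\Psi^\top dt=\cL\cL^*$, where $\cL^*y=\Psi(\cd)^\top y$. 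Finite-dimensional linear algebra gives $\im\cL=\im\cL\cL^*=\im G(0,T)$: indeed, if $G(0,T)y=0$ then $|\cL^*y|^2=0$, hence $\ker G(0,T)=(\im\cL)^\perp$. This proves necessity.

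For sufficiency, suppose $x-\dbE[\F_0(T)\xi]=G(0,T)y$ for some $y$. Choose
$$
u(s)\deq-\Psi(s)^\top y=-\big(B^\top\F_0(s)^\top+\bB^\top\dbE[\F_0(s)^\top]\big)y,
$$
which lies in $L^2_\dbF(0,T;\dbR^m)$ by the moment bound on $\F_0$. Next solve the MF-BSDE \rf{eq:EPhiY} with this $f=Bu+\bB\dbE u$ and terminal value $\xi$ to obtain $(Y,Z)$. Since $\sF_0$ is trivial, $Y(0)$ is deterministic, and Proposition~\ref{prop:EPhiY} yields
$$
Y(0)=\dbE[\F_0(T)\xi]+\cL\cL^*y=\dbE[\F_0(T)\xi]+G(0,T)y=x.
$$
Taking $z\deq Z$, the process $Y$ solves the forward state equation \rf{eq:system} with $X(0)=x$ and control $(u,z)$. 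Forward uniqueness then gives $X(\cd;x,u,z)=Y$, so $X(T)=\xi$.

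The only delicate points are the verification $Y(0)\in\dbR^n$, which uses triviality of $\sF_0$, and the identification of the BSDE solution with the forward solution; both are routine given well-posedness. The main computation to check carefully is the swap of the mean-field term, $\dbE[\F_0\bB\dbE u]=\dbE[(\dbE\F_0)\bB u]$, which is what makes $\cL^*$ produce exactly the integrand of $G(0,T)$.
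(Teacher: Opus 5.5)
Your proof is correct and takes essentially the same route as the paper. Both use Proposition~\ref{prop:EPhiY} at $t=0$ to write $Y(0)=\dbE[\F_0(T)\xi]+\cR u$, where the paper's $\cR$ is your $-\cL$, and then show $\im\cR=\im(\cR\cR^*)=\im G(0,T)$. Your version is slightly more explicit: it exhibits the steering control $u=-\Psi(\cd)^\top y$, $z=Z$, and it spells out that the backward solution coincides with the forward state, a step the paper leaves implicit.
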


\begin{proof}
Define the bounded linear operator
$$
\cR:L_{\dbF}^2(0,T;\dbR^m)\longrightarrow\dbR^n
$$
by
$$
\cR u\deq-\dbE\int_0^T\F_0(s)\big[B u(s)+\bB\dbE u(s)\big]ds.
$$
Fix $u(\cd)\in L_{\dbF}^2(0,T;\dbR^m)$, and let $(Y(\cd),Z(\cd))$ be the adapted solution of
$$
\lt\{\begin{aligned}
dY(s) &=[AY(s)+\bA\dbE Y(s)+Bu(s)+\bB\dbE u(s)+CZ(s)+\bC\dbE Z(s)]ds+Z(s)dW(s),\\
Y(T)&=\xi.
\end{aligned}\rt.
$$
Since $\F_0(0)=I_n$ and $\sF_0$ is trivial, \autoref{prop:EPhiY} gives
$$
Y(0)=\dbE[\F_0(T)\xi]+\cR u.
$$
Consequently, the pair $(x,\xi)$ can be connected if and only if
$$
x-\dbE[\F_0(T)\xi]\in\im\cR.
$$
For $a\in\dbR^n$, a direct calculation shows that
$$
(\cR^*a)(s)=-\[B^\top\F_0(s)^\top+\bB^\top[\dbE\F_0(s)]^\top\]a.
$$ 
Hence, by the definition of $G(0,T)$,
$$
\cR\cR^*a=G(0,T)a,\q a\in\dbR^n.
$$
Thus, $G(0,T)=\cR\cR^*$. Moreover, $\ker(\cR\cR^*)=\ker\cR^*$.
Since the range of $\cR$ is a subspace of the finite-dimensional
space $\dbR^n$, it is closed. Therefore,
$$
\im G(0,T)=\im(\cR\cR^*)=(\ker\cR^*)^\perp=\im\cR.
$$
It follows that
$$
x-\dbE[\F_0(T)\xi]\in\im\cR  \q\Longleftrightarrow\q  x-\dbE[\F_0(T)\xi]\in\im G(0,T),
$$
which proves the result.
\end{proof}

Unlike the Gramian criterion in \cite{yu2021}, which gives a global
characterization of exact controllability, 
\autoref{thm:controllability-characterization} provides a finer, pointwise
characterization of all transferable initial--terminal state pairs. In
particular, it remains informative even when $G(0,T)$ is singular, by identifying
precisely those pairs $(x,\xi)$ that can be connected. Requiring every
initial--terminal state pair to be transferable immediately yields the Gramian
criterion of \cite{yu2021} as a consequence.

\begin{corollary}\label{coro:G(T)-controllability}
System \rf{eq:system} is exactly controllable on $[0,T]$ if and only if
the Gramian $G(0,T)$ defined by \rf{eq:Gramian} is invertible.
\end{corollary}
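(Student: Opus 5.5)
This follows directly from \autoref{thm:controllability-characterization}. Since the initial time is $t=0$ and $\sF_0$ is trivial, every initial state $\eta\in L^2_{\sF_0}(\Om;\dbR^n)$ is a deterministic vector $x\in\dbR^n$. Exact controllability on $[0,T]$ therefore means that every pair $(x,\xi)\in\dbR^n\times L^2_{\sF_T}(\Om;\dbR^n)$ is transferable. By the theorem, this holds if and only if
$$
x-\dbE[\F_0(T)\xi]\in\im G(0,T)\q\hbox{for all }(x,\xi).
$$

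For sufficiency, assume $G(0,T)$ is invertible. Then $\im G(0,T)=\dbR^n$, so the membership condition holds trivially for every pair, and each pair is connected by \autoref{thm:controllability-characterization}.

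For necessity, assume the system is exactly controllable. Take $\xi=0$, which lies in $L^2_{\sF_T}(\Om;\dbR^n)$, and let $x\in\dbR^n$ be arbitrary. The theorem gives $x\in\im G(0,T)$, so $\im G(0,T)=\dbR^n$. Since $G(0,T)$ is a square matrix with full range, it is invertible.

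There is no real obstacle here. The only points to state explicitly are that $\sF_0$ is trivial, so $t=0$ initial data reduce to $\dbR^n$, and that $\xi=0$ is an admissible terminal state.
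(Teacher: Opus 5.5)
Your proposal is correct and follows the paper's approach: the paper obtains the corollary from \autoref{thm:controllability-characterization} by requiring every initial--terminal pair to be transferable. Your choice of $\xi=0$ with arbitrary $x\in\dbR^n$ makes that necessity step explicit.
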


\subsection{Finite-dimensional and Hautus criteria}

By \autoref{coro:G(T)-controllability}, exact controllability of system
\eqref{eq:system} on $[0,T]$ is reduced to the invertibility of $G(0,T)$.
We now develop finite-dimensional and spectral characterizations of this
condition, beginning with several preparatory observations. Let 
$$
\hA\deq A+\bA,\quad\hC\deq C+\bC,\quad\hB\deq B+\bB,
$$
and define recursively
\begin{align*}
\left\{\begin{aligned}
\cA_0 &=\hB, \q \cB_0=B, \\
\cA_{k+1} &=(\hA\cA_k,\hC\cB_k), \q k\in\dbN, \\
\cB_{k+1} &=(A\cB_k,C\cB_k),     \q k\in\dbN,
\end{aligned}\right.\qq
\left\{\begin{aligned}
V_0 &=\im B, \q U_0=\im\hB, \\
V_{k+1} &=AV_k+CV_k+V_0,       \q k\in\dbN, \\
U_{k+1} &=\hA U_k+\hC V_k+U_0, \q k\in\dbN.
\end{aligned}\right.
\end{align*}
The subspaces $V_k$ defined above are the finite-step reachable subspaces
associated with the corresponding non-mean-field SDE 
\begin{equation}\label{eq:system-SDE}\left\{\begin{aligned}
dX(s) &=[AX(s)+Bu(s)+Cz(s)]ds+z(s)dW(s), \q s\in[t,T],\\
 X(t) &=\eta.
\end{aligned}\right.\end{equation}

The following lemma gives explicit matrix representations of the recursively
defined subspaces $U_k$ and $V_k$. It also records the finite-step
stabilization of $\{V_k\}$, which was established in \cite{sun2026}.

\begin{lemma}\label{lem:U-cA}
For every $k\in\dbN$,
$$
U_k=\im(\cA_0,\cA_1,\ldots,\cA_k), \q V_k=\im(\cB_0,\cB_1,\ldots,\cB_k).
$$
In particular,
$$
U_k\subseteq U_{k+1}, \q V_k\subseteq V_{k+1}, \q k\in\dbN,
$$
and
$$
V_k=V_{n-1}, \q\forall k\ges n-1.
$$
\end{lemma}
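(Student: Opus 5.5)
The plan is to argue by induction on $k$, proving both matrix representations together, since the recursion for $\cA_{k+1}$ and $U_{k+1}$ involves $\cB_k$ and $V_k$.

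\emph{The $V_k$ identity.} The base case is immediate: $V_0=\im B=\im\cB_0$.

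For the inductive step, assume $V_k=\im(\cB_0,\ldots,\cB_k)$. Then
$$
AV_k+CV_k+V_0=\im B+\sum_{j=0}^k\big(\im A\cB_j+\im C\cB_j\big).
$$
Since $\cB_{j+1}=(A\cB_j,C\cB_j)$, we have $\im A\cB_j+\im C\cB_j=\im\cB_{j+1}$. Hence
$$
V_{k+1}=\im\cB_0+\sum_{j=1}^{k+1}\im\cB_j=\im(\cB_0,\ldots,\cB_{k+1}).
$$

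\emph{The $U_k$ identity.} The base case is $U_0=\im\hB=\im\cA_0$.

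Assume $U_k=\im(\cA_0,\ldots,\cA_k)$ and $V_k=\im(\cB_0,\ldots,\cB_k)$. Then
$$
\hA U_k+\hC V_k+U_0=\im\cA_0+\sum_{j=0}^k\big(\im\hA\cA_j+\im\hC\cB_j\big),
$$
using linearity of images under matrix multiplication. Since $\cA_{j+1}=(\hA\cA_j,\hC\cB_j)$, we have $\im\hA\cA_j+\im\hC\cB_j=\im\cA_{j+1}$. Summing gives $U_{k+1}=\im(\cA_0,\ldots,\cA_{k+1})$.

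\emph{Monotonicity.} The inclusions $U_k\subseteq U_{k+1}$ and $V_k\subseteq V_{k+1}$ follow at once, because adding columns to a block matrix can only enlarge its image.

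\emph{Stabilization.} This is the only step with any content, and it uses the standard dimension argument. Suppose $V_{k+1}=V_k$ for some $k$. Then
$$
V_{k+2}=AV_{k+1}+CV_{k+1}+V_0=AV_k+CV_k+V_0=V_{k+1},
$$
so by induction the sequence is constant from index $k$ onward.

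Since $\{V_k\}$ is a nondecreasing chain of subspaces of $\dbR^n$, each strict inclusion raises the dimension by at least one. If $V_0=\{0\}$, then every $V_k=\{0\}$ and there is nothing to prove. Otherwise $\dim V_0\ges1$, so at most $n-1$ strict inclusions are possible. Therefore some $k\les n-1$ satisfies $V_{k+1}=V_k$, and consequently $V_k=V_{n-1}$ for all $k\ges n-1$.

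This recovers the stabilization recorded in \cite{sun2026}, so one may simply cite that result instead. No step presents a genuine obstacle; the only care needed is to run the $U$ and $V$ inductions jointly.
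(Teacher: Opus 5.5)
Your proof is correct, and for the two matrix representations and the monotonicity it follows the paper's argument essentially verbatim: a joint induction using $\cA_{j+1}=(\hA\cA_j,\hC\cB_j)$ and $\cB_{j+1}=(A\cB_j,C\cB_j)$. The only difference is the stabilization $V_k=V_{n-1}$ for $k\ges n-1$, where the paper simply cites \cite[Proposition 3.2]{sun2026}. You instead give the standard dimension-count argument (the chain is constant once $V_{k+1}=V_k$, and an equality must occur at some $k\les n-1$ unless $V_0=\{0\}$), which is valid and makes the lemma self-contained.
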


\begin{proof}
The first two identities are proved simultaneously by induction. They are
obvious for $k=0$. Suppose that they hold for some $k\in\dbN$. Then
\begin{align*}
U_{k+1} 
&=\hA U_k+\hC V_k+U_0\\
&=\hA\im(\cA_0,\ldots,\cA_k)+\hC\im(\cB_0,\ldots,\cB_k)+\im\cA_0 \\
&=\im(\cA_0,\hA\cA_0,\hC\cB_0,\ldots,\hA\cA_k,\hC\cB_k) \\
&=\im(\cA_0,\cA_1,\ldots,\cA_{k+1}).
\end{align*}
The same argument gives
$$
V_{k+1}=\im(\cB_0,\cB_1,\ldots,\cB_{k+1}).
$$
This completes the induction. The monotonicity follows immediately. The stabilization
$$
V_k=V_{n-1}, \q\forall k\ges n-1,
$$
follows from \cite[Proposition 3.2]{sun2026}.
\end{proof}

\autoref{lem:U-cA} shows that the sequence $\{V_k\}$ stabilizes at $V_{n-1}$. 
We next use this fact to obtain a finite-dimensional representation of $U_{2n-1}$ 
and to show that the sequence $\{U_k\}$ also stabilizes after finitely many steps.

\begin{lemma}\label{lem:U}
We have
$$
U_{2n-1} = \sum_{j=0}^{n-1}\hA^j\big(U_0+\hC V_{n-1}\big).
$$
Moreover,
$$
U_k=U_{2n-1}, \q\forall k\ges2n-1.
$$
\end{lemma}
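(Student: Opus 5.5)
Unrolling the recursion for $U_k$ gives a closed form that is visibly monotone and eventually constant. Set $W\deq U_0+\hC V_{n-1}$. Since $V_k=V_{n-1}$ for all $k\ges n-1$ by \autoref{lem:U-cA}, this subspace captures all contributions of the centered reachable directions once $k$ is large.

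The first step is the general identity
$$
U_k=\sum_{j=0}^{k}\hA^{j}U_0+\sum_{j=0}^{k-1}\hA^{j}\hC V_{k-1-j},\q k\in\dbN_+,
$$
proved by induction on $k$ directly from $U_{k+1}=\hA U_k+\hC V_k+U_0$. Indeed, applying $\hA$ to the formula for $U_k$ shifts every index $j$ by one. Adding $U_0$ supplies the $j=0$ term of the first sum, and adding $\hC V_k$ supplies the $j=0$ term of the second sum, since $V_{k-j}$ with $j=0$ is $V_k$.

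Next take $k=2n-1$. For $0\les j\les n-1$ we have $2n-2-j\ges n-1$, so $V_{2n-2-j}=V_{n-1}$, and these terms contribute exactly $\sum_{j=0}^{n-1}\hA^j\hC V_{n-1}$. The remaining terms are $j\ges n$, in either sum, together with $j=2n-1$ in the first sum. By the Cayley--Hamilton theorem, $\hA^j$ for $j\ges n$ is a linear combination of $I,\hA,\ldots,\hA^{n-1}$. Hence $\hA^j U_0\subseteq\sum_{i<n}\hA^iU_0$ for such $j$. Likewise $\hA^j\hC V_{2n-2-j}\subseteq\sum_{i<n}\hA^i\hC V_{n-1}$, using the monotonicity $V_{2n-2-j}\subseteq V_{n-1}$ from \autoref{lem:U-cA}. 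The same monotonicity also shows that the $j<n$ terms of the first sum lie in the claimed space. Together these give $U_{2n-1}=\sum_{j=0}^{n-1}\hA^jW$.

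For stabilization, note that $S\deq\sum_{j=0}^{n-1}\hA^jW$ is $\hA$-invariant, again by Cayley--Hamilton, and contains $W$. By induction, if $U_k=S$ for some $k\ges 2n-1$, then
$$
U_{k+1}=\hA S+\hC V_k+U_0=\hA S+\hC V_{n-1}+U_0\subseteq S,
$$
since $V_k=V_{n-1}$ for $k\ges n-1$. Combined with $U_k\subseteq U_{k+1}$ from \autoref{lem:U-cA}, this gives $U_{k+1}=S$.

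The only delicate point is the index bookkeeping in the first step. In particular, one must check that all $V$-indices appearing when $j\les n-1$ are at least $n-1$. This is exactly why the statement uses $2n-1$ rather than $n$.
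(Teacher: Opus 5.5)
Your proof is correct. It reaches the identity for $U_{2n-1}$ by a different route from the paper's.

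You unroll the recursion into the closed form $U_k=\sum_{j=0}^{k}\hA^{j}U_0+\sum_{j=0}^{k-1}\hA^{j}\hC V_{k-1-j}$. You then sort the terms of $U_{2n-1}$ by index, using Cayley--Hamilton for $j\ges n$ and the stabilization and monotonicity of $\{V_k\}$ for the $V$-indices.

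The paper never writes down this closed form. It sets $\cK\deq\sum_{j=0}^{n-1}\hA^j(U_0+\hC V_{n-1})$ and proves two inclusions:
\begin{itemize}
\item $U_k\subseteq\cK$ for every $k$, by induction from $\hA\cK\subseteq\cK$, $U_0\subseteq\cK$ and $\hC V_k\subseteq\hC V_{n-1}$;
\item $\cK\subseteq U_{2n-1}$, from $\hA^jU_0\subseteq U_j$ and $\hA^j\hC V_{n-1}\subseteq U_{n+j}$, which follow directly from the recursion and the monotonicity of $\{U_k\}$.
\end{itemize}

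The paper's sandwich argument avoids index bookkeeping. It also gives the bound $U_k\subseteq\cK$ uniformly in $k$, which together with monotonicity would already yield the stabilization for free. Your closed form, by contrast, describes every $U_k$ exactly. It also makes visible where the threshold $2n-1$ comes from, namely the requirement $k-1-j\ges n-1$ for all $j\les n-1$. Your stabilization step coincides with the paper's.

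A minor remark: the terms $\hA^jU_0$ with $j<n$ lie in $\sum_{j=0}^{n-1}\hA^j(U_0+\hC V_{n-1})$ simply because $U_0\subseteq U_0+\hC V_{n-1}$. No monotonicity of $\{V_k\}$ is needed there.
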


\begin{proof}
Set
$$
\cK \deq \sum_{j=0}^{n-1}\hA^j\big(U_0+\hC V_{n-1}\big).
$$
By the Cayley--Hamilton theorem, $\hA\cK\subseteq\cK$. Moreover,
$$
U_0\subseteq\cK, \q \hC V_k\subseteq\hC V_{n-1}\subseteq\cK, \q\forall k\in\dbN.
$$
It follows inductively from
$$
U_{k+1}=\hA U_k+\hC V_k+U_0
$$
that $U_k\subseteq\cK$ for every $k\in\dbN$. Hence, $U_{2n-1}\subseteq\cK$. Conversely,
$$
\hA^jU_0\subseteq U_j\subseteq U_{2n-1},
\q
0\les j\les n-1.
$$
Since $\hC V_{n-1}\subseteq U_n$, we also have
$$
\hA^j\hC V_{n-1}\subseteq U_{n+j}\subseteq U_{2n-1}, \q 0\les j\les n-1.
$$
Therefore, $\cK\subseteq U_{2n-1}$, and the first assertion follows.

\ms 

Finally, since
$$
\hA U_{2n-1}\subseteq U_{2n-1}, \q U_0+\hC V_{n-1}\subseteq U_{2n-1},\q
V_k=V_{n-1}, \q\forall k\ges n-1,
$$
it follows that
$$
U_{2n}=\hA U_{2n-1}+\hC V_{2n-1}+U_0\\=\hA U_{2n-1}+\hC V_{n-1}+U_0\subseteq U_{2n-1}.
$$
On the other hand, $U_{2n-1}\subseteq U_{2n}$ by the monotonicity of $\{U_k\}$. Hence,
$$
U_{2n}=U_{2n-1}.
$$
Repeating the same argument inductively yields $U_k=U_{2n-1}$ for all $k\ges2n-1$.
\end{proof}

We now turn to the dual interpretation of $V_{n-1}$.
The following lemma identifies $V_{n-1}^\perp$ as the unobservable 
subspace of the dual dynamics. 

\begin{lemma}\label{lem:zero-condition-SDE}
Let $0\les t<T<\i$, and let $Y(\cd)$ solve
$$
dY(s)=-A^\top Y(s)ds-C^\top Y(s)dW(s), \q Y(t)=y.
$$
Then
$$
B^\top Y(s)=0,\q\as,\q\forall s\in[t,T] \q\Longleftrightarrow\q y\in V_{n-1}^\perp.
$$
\end{lemma}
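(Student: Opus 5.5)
We prove the two implications separately. The key structural fact is that $V_{n-1}$ contains $\im B$ and is invariant under both $A$ and $C$. Indeed, by \autoref{lem:U-cA}, $AV_{n-1}+CV_{n-1}\subseteq V_n=V_{n-1}$. Taking orthogonal complements, $V_{n-1}^\perp\subseteq\ker B^\top$, and $V_{n-1}^\perp$ is invariant under both $A^\top$ and $C^\top$.

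\emph{Sufficiency.} Assume $y\in V_{n-1}^\perp$. Let $Q$ be the orthogonal projection onto $V_{n-1}^\perp$ and set $\tilde Y=QY$. Since $A^\top$ and $C^\top$ leave $V_{n-1}^\perp$ invariant, the equation restricted to $V_{n-1}^\perp$ is a well-posed linear SDE. Its solution, viewed in $\dbR^n$, solves the original equation with initial value $y$. By uniqueness it coincides with $Y$, so $Y(s)\in V_{n-1}^\perp$ almost surely for every $s\in[t,T]$. Hence $B^\top Y(s)=0$ almost surely.

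\emph{Necessity.} Assume $B^\top Y(s)=0$ almost surely for all $s\in[t,T]$. We show by induction on $k$ that
$$
\cB_k^\top Y(s)=0\q \as,\q\forall s\in[t,T],\ \forall k\in\dbN.
$$
Suppose $M^\top Y(s)\equiv0$ for some constant matrix $M$. Then the process $M^\top Y$ is a continuous semimartingale that vanishes identically. Its Itô differential is
$$
d(M^\top Y)=-M^\top A^\top Y\,ds-M^\top C^\top Y\,dW.
$$
By uniqueness of the semimartingale decomposition, both the martingale part and the finite-variation part vanish. The martingale part gives
$$
\int_t^\cdot M^\top C^\top Y\,dW\equiv0,
$$
so its quadratic variation $\int_t^\cdot|M^\top C^\top Y|^2ds$ is zero, and therefore $(CM)^\top Y=0$ for a.e.\ $s$. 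The drift then gives $(AM)^\top Y=0$ for a.e.\ $s$. Both identities extend to every $s$ by continuity of $Y$. Starting from $M=B$ and applying this step to each block, we obtain $\cB_k^\top Y(s)=0$ for all $k$.

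Evaluating at $s=t$, where $Y(t)=y$ is deterministic (or at least the identity holds almost surely), gives $\cB_k^\top y=0$ for all $k\les n-1$. By \autoref{lem:U-cA}, this means $y\perp V_{n-1}$, i.e.\ $y\in V_{n-1}^\perp$.

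The main obstacle is the step extracting both the $ds$ and $dW$ coefficients from an identically vanishing Itô process. This is handled by uniqueness of the canonical decomposition of continuous semimartingales together with the quadratic variation argument above. If $t>0$ and $y$ is allowed to be random, the same argument applies pathwise in the almost sure sense.
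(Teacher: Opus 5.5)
Your proof is correct and follows the paper's argument essentially step for step. For sufficiency, both use the invariance of $V_{n-1}^\perp$ under $A^\top$ and $C^\top$ together with uniqueness. For necessity, both extract $(A\cB_k)^\top Y=0$ and $(C\cB_k)^\top Y=0$ inductively from the identically vanishing semimartingale $\cB_k^\top Y$ and then evaluate at $s=t$. Your quadratic-variation and continuity step just spells out what the paper compresses into ``uniqueness of the semimartingale decomposition''; the projection $Q$ and $\tilde Y=QY$ you introduce are never used and can be dropped.
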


\begin{proof}
Suppose first that
$$
B^\top Y(s)=0,\q\as,\q \forall s\in[t,T].
$$
Since $\cB_0=B$, assume inductively that
$$
\cB_k^\top Y(s)=0,\q\as,\q \forall s\in[t,T].
$$
Then
$$
0 = d(\cB_k^\top Y) = -(A\cB_k)^\top Y ds-(C\cB_k)^\top Y dW.
$$
By uniqueness of the semimartingale decomposition,
$$
(A\cB_k)^\top Y(s)=0,
\q
(C\cB_k)^\top Y(s)=0,
$$
and hence $\cB_{k+1}^\top Y(s)=0$. By induction,
$$
\cB_k^\top Y(s)=0, \q\as, \q\forall s\in[t,T], \q k\in\dbN.
$$
Evaluating at $s=t$ and using $V_{n-1}=\im(\cB_0,\ldots,\cB_{n-1})$,
we obtain $y\in V_{n-1}^\perp$.

\ms 

Conversely, since $V_k$ stabilizes at $V_{n-1}$,
$$
AV_{n-1}\subseteq V_{n-1}, \q CV_{n-1}\subseteq V_{n-1}.
$$
Thus, $V_{n-1}^\perp$ is invariant under $A^\top$ and $C^\top$. If
$y\in V_{n-1}^\perp$, uniqueness implies
$$
Y(s)\in V_{n-1}^\perp, \q\as, \q\forall s\in[t,T].
$$
Since $\im B\subseteq V_{n-1}$, it follows that
$$
B^\top Y(s)=0, \q\as, \q\forall s\in[t,T].
$$
This completes the proof. 
\end{proof}

We now turn to the dual dynamics of the mean-field system. To extend the
observation criterion in \autoref{lem:zero-condition-SDE} to the mean-field
setting, we separate the dual state into its mean and centered components.

\ms 

For $t\ges0$ and $\beta\in\dbR^n$, let $p(\cd)$ solve
\begin{equation}\label{eq:dual}\left\{\begin{aligned}
dp(s) &=-[A^\top p(s)+\bA^\top\dbE p(s)]ds-[C^\top p(s)+\bC^\top\dbE p(s)]dW(s), \q s\ges t,\\
 p(t) &=\b.
\end{aligned}\right.\end{equation}
Set
$$
q(s)\deq\dbE p(s), \q r(s)\deq p(s)-\dbE p(s).
$$
Then $q(s)$ is deterministic, $\dbE r(s)=0$, and
\begin{equation}\label{eq:q-r}\left\{\begin{aligned}
d\bp q(s)\\r(s)\ep &= -\bm A^\top\bp q(s)\\r(s)\ep ds-\bm C^\top\bp q(s)\\r(s)\ep dW(s),\\
 \bp q(t)\\r(t)\ep &= \bp\b\\0\ep,
\end{aligned}\right.\end{equation}
where
$$
\bm A\deq \begin{pmatrix}\hA&0\\0&A\end{pmatrix}, \q
\bm C\deq \begin{pmatrix}0&\hC\\0&C\end{pmatrix}, \q
\bm B\deq \begin{pmatrix}\hB&0\\0&B\end{pmatrix}.
$$
To apply \autoref{lem:zero-condition-SDE} to the block system \rf{eq:q-r}, define
$$
S_0\deq\im\bm B, \q S_{k+1}\deq\bm A S_k+\bm C S_k+S_0, \q k\in\dbN.
$$
The upper and lower components of vectors in $S_k$ are generally coupled.
Let $\pi_1,\pi_2:\dbR^{2n}\to\dbR^n$ be the canonical projections defined by
$$
\pi_1\bp x\\y\ep=x,
\q
\pi_2\bp x\\y\ep=y.
$$
The following lemma shows that the projections of $S_k$ are precisely $U_k$ and $V_k$.

\begin{lemma}\label{lem:S-projection}
For every $k\in\dbN$,
$$
\pi_1(S_k)=U_k, \q \pi_2(S_k)=V_k.
$$
Consequently, for every $\b\in\dbR^n$,
$$
\bp\b\\0\ep\in S_k^\perp   \q\Longleftrightarrow\q \beta\in U_k^\perp.
$$
\end{lemma}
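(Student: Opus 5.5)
Induction on $k$ proves both projection identities at once. For $k=0$, $S_0=\im\bm B=\{(\hB u,Bv):u,v\in\dbR^m\}$. Since $u$ and $v$ range independently, $\pi_1(S_0)=\im\hB=U_0$ and $\pi_2(S_0)=\im B=V_0$. Now assume $\pi_1(S_k)=U_k$ and $\pi_2(S_k)=V_k$. By linearity of $\pi_i$ and the block structure,
$$
\pi_1(\bm A s)=\hA\pi_1(s),\q \pi_1(\bm C s)=\hC\pi_2(s),\q \pi_2(\bm A s)=A\pi_2(s),\q \pi_2(\bm C s)=C\pi_2(s).
$$
Projections of sums of subspaces are sums of projections. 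Hence
$$
\pi_1(S_{k+1})=\hA\pi_1(S_k)+\hC\pi_2(S_k)+\pi_1(S_0)=\hA U_k+\hC V_k+U_0=U_{k+1},
$$
and likewise $\pi_2(S_{k+1})=AV_k+CV_k+V_0=V_{k+1}$. The first projection does not lose information even though the components are coupled, because $\pi_1(\bm A S_k+\bm C S_k)=\pi_1(\bm A S_k)+\pi_1(\bm C S_k)$ holds exactly. The coupling is irrelevant since we only take images of sums, never intersections.

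For the consequence, the key identity is
$$
\Big\lan \bp\b\\0\ep, s\Big\ran=\lan\b,\pi_1(s)\ran \q\hbox{for all } s\in\dbR^{2n}.
$$
Therefore $(\b^\top,0)^\top\perp S_k$ if and only if $\b\perp\pi_1(S_k)=U_k$, that is, $\b\in U_k^\perp$.

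This argument has no real obstacle. The only point needing care is checking the block products $\bm A s$ and $\bm C s$. The upper block of $\bm C s$ is $\hC$ applied to the lower component, and this is exactly what produces the term $\hC V_k$ rather than $\hC U_k$ in the recursion for $U_{k+1}$.
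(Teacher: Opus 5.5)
Your proposal is correct and takes the same route as the paper: induction via linearity of $\pi_1,\pi_2$ and the block forms of $\bm A$ and $\bm C$, where the upper block of $\bm C s$ is $\hC\pi_2(s)$, followed by the identity $\lan(\b^\top,0)^\top,\z\ran=\lan\b,\pi_1(\z)\ran$ for the orthogonality consequence. You only write out the base case and block computations in more detail than the paper does.
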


\begin{proof}
For $k=0$, the conclusion is clear.
If $\pi_1(S_k)=U_k$ and $\pi_2(S_k)=V_k$ for some $k\in\dbN$, 
then by the linearity of $\pi_1,\pi_2$ and the block structures of $\bm A$ and $\bm C$,
\begin{align*}
\pi_1(S_{k+1})
&=\hA\pi_1(S_k)+\hC\pi_2(S_k)+U_0
=U_{k+1},\\
\pi_2(S_{k+1})
&=A\pi_2(S_k)+C\pi_2(S_k)+V_0
=V_{k+1}.
\end{align*}
Thus, the first assertion follows by induction. Finally, 
\begin{align*}
\bp\b\\0\ep\in S_k^\perp 
&\q\Longleftrightarrow\q \lan\bp\b\\0\ep,\z\ran=0, \q\forall\z\in S_k\\
&\q\Longleftrightarrow\q \lan\b,\pi_1(\z)\ran=0,      \q\forall\z\in S_k\\
&\q\Longleftrightarrow\q \b\in\pi_1(S_k)^\perp=U_k^\perp.
\end{align*}
The proof is complete.
\end{proof}

We now turn to the dual interpretation of $U_{2n-1}$.
The following lemma identifies $U_{2n-1}^\perp$ as the unobservable
subspace of the mean-field dual dynamics.

\begin{lemma}\label{lem:zero-condition}
Let $0\les t<T<\i$, and let $p(\cd)$ solve \rf{eq:dual}. Then
$$
B^\top p(s)+\bB^\top\dbE p(s)=0,\q\as,\q\forall s\in[t,T]
\q\Longleftrightarrow\q \b\in U_{2n-1}^\perp.
$$
\end{lemma}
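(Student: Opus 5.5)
The plan is to reduce the statement to \autoref{lem:zero-condition-SDE}, applied to the $2n$-dimensional block system \rf{eq:q-r}, and then use \autoref{lem:S-projection} to return to $U_{2n-1}$.

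First, I rewrite the observation in block form. Since $q=\dbE p$ and $p=q+r$, we have
$$
B^\top p(s)+\bB^\top\dbE p(s)=\hB^\top q(s)+B^\top r(s).
$$
I claim this vanishes a.s. for all $s\in[t,T]$ if and only if
$$
\bm B^\top\bp q(s)\\ r(s)\ep=\bp\hB^\top q(s)\\ B^\top r(s)\ep=0.
$$
One direction is immediate. For the other, take expectations: $q(s)$ is deterministic and $\dbE r(s)=0$, so $\hB^\top q(s)=0$, and therefore $B^\top r(s)=0$ as well.

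Next, I check that \rf{eq:q-r} is the correct block system. Taking expectations in \rf{eq:dual} gives $\dot q=-\hA^\top q$, and subtracting gives
$$
dr=-A^\top r\,ds-(\hC^\top q+C^\top r)\,dW.
$$
Since
$$
\bm C^\top=\bp 0&0\\ \hC^\top & C^\top\ep,
$$
the diffusion of $(q,r)$ is $-\bm C^\top(q,r)^\top$. Hence $(q,r)$ solves a non-mean-field SDE of exactly the type in \autoref{lem:zero-condition-SDE}, in dimension $2n$, with coefficients $(\bm A,\bm C,\bm B)$ and initial value $(\b,0)^\top$.

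I then apply \autoref{lem:zero-condition-SDE} in dimension $2n$. Its reachable sequence is exactly $S_k$, since $S_k$ is built by the same recursion as $V_k$ with $(\bm A,\bm C,\bm B)$ in place of $(A,C,B)$. So the vanishing condition is equivalent to
$$
(\b,0)^\top\in S_{2n-1}^\perp,
$$
because the stabilization index in dimension $2n$ is $2n-1$. I should confirm that the proof of \autoref{lem:zero-condition-SDE} and \cite[Proposition 3.2]{sun2026} are dimension-generic; they are, since they only use the recursion and the facts that $\{S_k\}$ is increasing and stabilizes within the ambient dimension. Finally, \autoref{lem:S-projection} gives
$$
(\b,0)^\top\in S_{2n-1}^\perp\iff\b\in U_{2n-1}^\perp,
$$
which proves the lemma.

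The main obstacle is making sure the block-matrix identification has the correct transposes. The cross term $\hC^\top q$ must appear in the $r$-equation, which is why the off-diagonal entry $\hC$ sits in the upper-right of $\bm C$. With that in place, the argument is a direct reduction.
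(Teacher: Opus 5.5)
Your proposal is correct and follows the paper's proof essentially step for step. You write the observation as $\hB^\top q(s)+B^\top r(s)$, use that $q$ is deterministic and $\dbE r(s)=0$ to reduce to $\bm B^\top(q,r)^\top=0$, apply \autoref{lem:zero-condition-SDE} to the $2n$-dimensional system \rf{eq:q-r}, and conclude with \autoref{lem:S-projection}. Your explicit checks of the block drift and diffusion, and of the stabilization index $2n-1$ in dimension $2n$, only fill in details the paper takes for granted.
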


\begin{proof}
Since $p(s)=q(s)+r(s)$ and $\dbE p(s)=q(s)$,
$$
B^\top p(s)+\bB^\top\dbE p(s) = \hB^\top q(s)+B^\top r(s).
$$
As $q(s)$ is deterministic and $\dbE r(s)=0$, the right-hand side vanishes
almost surely if and only if
$$
\hB^\top q(s)=0, \q B^\top r(s)=0,\q\as
$$
Equivalently,
$$
\bm B^\top\bp q(s)\\r(s)\ep=0,\q\as
$$
Applying \autoref{lem:zero-condition-SDE} to the $2n$-dimensional
system \rf{eq:q-r} and then using \autoref{lem:S-projection}, we obtain
$$
B^\top p(s)+\bB^\top\dbE p(s)=0,\q\as,\q\forall s\in[t,T]
\q\Longleftrightarrow\q
\bp\b\\0\ep\in S_{2n-1}^\perp
\q\Longleftrightarrow\q
\b\in U_{2n-1}^\perp.
$$
This completes the proof.  
\end{proof}

\begin{remark}
Combining \autoref{lem:U-cA} and \autoref{lem:U}, we obtain
$$
U_{2n-1}=\im(\cA_0,\ldots,\cA_{2n-1})=\im(\cA_0,\cA_1,\ldots).
$$
Hence, \autoref{lem:zero-condition} yields
$$
B^\top p(s)+\bB^\top\dbE p(s)=0,~\as,~\forall s\in[t,T]
~\Leftrightarrow~
\cA_j^\top\beta=0,~0\les j\les2n-1
~\Leftrightarrow~
\cA_j^\top\beta=0,~j\in\dbN.
$$
Thus, the infinite family of orthogonality conditions appearing in
\cite{yu2021} reduces to finitely many algebraic conditions.
\end{remark}

Although the present section concerns the zero initial time, the Gramian argument 
is naturally valid on an arbitrary interval $[t,T]$. For $0\les t<T<\i$, define
\begin{equation}\label{eq:G(t,T)}
G(t,T)\deq\dbE\int_t^T\(\F_t(s)B+[\dbE\F_t(s)]\bB\)\(\F_t(s)B+[\dbE\F_t(s)]\bB\)^\top ds,
\end{equation}
where $\F_t(\cd)$ solves
\begin{equation}\label{eq:Phi-t}\left\{\begin{aligned}
d\F_t(s) &=-\big\{\F_t(s)A+[\dbE\F_t(s)]\bA\big\}ds
           -\big\{\F_t(s)C+[\dbE\F_t(s)]\bC\big\}dW(s), \q s\ges t,\\
 \F_t(t) &=I_n.
\end{aligned}\right.
\end{equation}
Taking expectations in \eqref{eq:Phi-t} gives
$$
\dbE\F_t(s)=e^{-(s-t)\hA}, \q s\in[t,T].
$$

The following theorem characterizes the invertibility of $G(t,T)$ by a finite-dimensional 
condition that is independent of the particular interval.

\begin{theorem}\label{thm:G(t,T)}
Let $0\les t<T<\i$. Then the following statements are equivalent:
\begin{enumerate}[\rm(i)]
\item $G(t,T)$ is invertible.
\item $U_{2n-1}=\dbR^n$.
\item $\rank(\cA_0,\cA_1,\ldots,\cA_{2n-1})=n$.
\end{enumerate}
\end{theorem}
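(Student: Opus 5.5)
The plan is to treat (ii)$\Leftrightarrow$(iii) as immediate and to prove (i)$\Leftrightarrow$(ii) by duality. \autoref{lem:U-cA} gives $U_{2n-1}=\im(\cA_0,\ldots,\cA_{2n-1})$, so $U_{2n-1}=\dbR^n$ exactly when this matrix has rank $n$, which is (iii). For (i)$\Leftrightarrow$(ii), I will show that
$$
\ker G(t,T)=U_{2n-1}^\perp .
$$
Since $G(t,T)$ is symmetric positive semidefinite, this identity gives both directions at once.

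\emph{Step 1 (quadratic form).} For $\b\in\dbR^n$, the definition \rf{eq:G(t,T)} gives
$$
\lan G(t,T)\b,\b\ran=\dbE\int_t^T\big|B^\top\F_t(s)^\top\b+\bB^\top[\dbE\F_t(s)]^\top\b\big|^2ds .
$$
Because $G(t,T)\ges0$, the vector $\b$ lies in $\ker G(t,T)$ if and only if this integral vanishes. Since the integrand is continuous in $s$ in mean square, that holds if and only if
$$
B^\top\F_t(s)^\top\b+\bB^\top[\dbE\F_t(s)]^\top\b=0 \q\as \hbox{ for all } s\in[t,T].
$$

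\emph{Step 2 (identify the adjoint flow).} Set $p(s)\deq\F_t(s)^\top\b$. Transposing \rf{eq:Phi-t} gives
$$
dp=-[A^\top p+\bA^\top(\dbE\F_t)^\top\b]ds-[C^\top p+\bC^\top(\dbE\F_t)^\top\b]dW,\q p(t)=\b .
$$
Moreover $(\dbE\F_t(s))^\top\b=\dbE p(s)$, because $\b$ is deterministic. Hence $p$ is the solution of the dual equation \rf{eq:dual}, and the condition from Step 1 reads
$$
B^\top p(s)+\bB^\top\dbE p(s)=0 \q\as,\q \forall s\in[t,T].
$$
By \autoref{lem:zero-condition}, this holds if and only if $\b\in U_{2n-1}^\perp$. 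This proves $\ker G(t,T)=U_{2n-1}^\perp$, so $G(t,T)$ is invertible if and only if $U_{2n-1}=\dbR^n$.

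\emph{Main obstacle.} The delicate point is passing from the vanishing of the integral to a statement for \emph{every} $s\in[t,T]$, which is what \autoref{lem:zero-condition} requires. The integral only forces the integrand to vanish for a.e.\ $s$. I will handle this using the continuity of $s\mapsto p(s)$ in $L^2(\Om)$: the set of $s$ where $B^\top p(s)+\bB^\top\dbE p(s)=0$ in $L^2$ is closed, and a closed set of full measure in $[t,T]$ is all of $[t,T]$.

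The remaining check is that transposition commutes with expectation and the mean-field terms, so that $p$ really solves \rf{eq:dual} with initial time $t$. This is routine but must be verified, since \rf{eq:Phi-t} multiplies on the right while \rf{eq:dual} is written for column vectors.

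Finally, the resulting condition $U_{2n-1}=\dbR^n$ does not depend on $t$ or $T$. This gives the interval-independence asserted before the theorem.
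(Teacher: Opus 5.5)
Your proposal is correct and follows essentially the same route as the paper. The paper likewise gets (ii)$\Leftrightarrow$(iii) from \autoref{lem:U-cA}, sets $p(s)=\F_t(s)^\top\b$ to identify $\b^\top G(t,T)\b$ with $\dbE\int_t^T|B^\top p+\bB^\top\dbE p|^2ds$, uses continuity of $p(\cd)$ to get vanishing for every $s$, and then applies \autoref{lem:zero-condition} to conclude $\ker G(t,T)=U_{2n-1}^\perp$.
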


\begin{proof}
By \autoref{lem:U-cA}, statements (ii) and (iii) are equivalent. For $\b\in\dbR^n$, set
$$
p(s)=\F_t(s)^\top\b, \q s\ges t.
$$
Then $p(\cd)$ solves \rf{eq:dual}, and
$$
\b^\top G(t,T)\b = \dbE\int_t^T\big|B^\top p(s)+\bB^\top\dbE p(s)\big|^2ds.
$$
By the continuity of $p(\cd)$ and \autoref{lem:zero-condition},
$$
\b^\top G(t,T)\b=0 \q\Longleftrightarrow\q
B^\top p(s)+\bB^\top\dbE p(s)=0, \q\as,~\forall s\in[t,T]
\q\Longleftrightarrow\q
\b\in U_{2n-1}^\perp.
$$
Since $G(t,T)$ is positive semidefinite, this yields $\ker G(t,T)=U_{2n-1}^\perp$. Therefore,
$$
G(t,T)\text{ is invertible}
\q\Longleftrightarrow\q
U_{2n-1}^\perp=\{0\}
\q\Longleftrightarrow\q
U_{2n-1}=\dbR^n.
$$
This completes the proof. 
\end{proof}

When $t=0$, equation \eqref{eq:Phi-t} coincides with \rf{eq:Phi}. 
Hence, the Gramian defined by \rf{eq:G(t,T)} coincides with $G(0,T)$ defined by \rf{eq:Gramian}.
Combining \autoref{coro:G(T)-controllability} with \autoref{thm:G(t,T)}, we obtain the following 
zero-time Hautus criterion.

\begin{theorem}\label{thm:criterion-U}
Let $T>0$, and let
$$
P=(v_1,\ldots,v_r)\in\dbR^{n\times r},  \q r=\dim V_{n-1},
$$
where $v_1,\ldots,v_r$ form a basis of $V_{n-1}$. Then the following statements are equivalent:
\begin{enumerate}[\rm(i)]
\item System \rf{eq:system} is exactly controllable on $[0,T]$.
\item $U_{2n-1}=\dbR^n$.
\item The deterministic pair $[\hA,(\hB,\hC P)]$ is controllable.
\item For every $\l\in\dbC$ and $x\in\dbC^n$,
      $$
      \hA^{\,\top} x=\l x, \q\hB^\top x=0, \q P^\top\hC^\top x=0 \q\Longrightarrow\q x=0.
      $$
\item For every $\l\in\sigma(\hA)$, $\rank(\l I_n-\hA,\hB,\hC P)=n$.
\end{enumerate}
\end{theorem}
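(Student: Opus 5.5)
The equivalence (i)$\Leftrightarrow$(ii) follows directly from earlier results. By \autoref{coro:G(T)-controllability}, exact controllability on $[0,T]$ is equivalent to invertibility of $G(0,T)$. Since \rf{eq:Phi-t} with $t=0$ coincides with \rf{eq:Phi}, \autoref{thm:G(t,T)} shows that this is in turn equivalent to $U_{2n-1}=\dbR^n$. The classical Hautus criterion for the deterministic pair $[\hA,(\hB,\hC P)]$ gives (iii)$\Leftrightarrow$(iv)$\Leftrightarrow$(v) at once. For (iv), note that $(\hB,\hC P)^\top x=0$ means exactly $\hB^\top x=0$ and $P^\top\hC^\top x=0$. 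For (v), the rank condition only needs checking at eigenvalues, since $\l I_n-\hA$ is invertible otherwise.

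The substantive step is (ii)$\Leftrightarrow$(iii). The plan is to identify $U_{2n-1}$ with the Kalman reachable subspace of $[\hA,(\hB,\hC P)]$. By \autoref{lem:U},
$$
U_{2n-1}=\sum_{j=0}^{n-1}\hA^j\big(\im\hB+\hC V_{n-1}\big).
$$
Since $\im P=V_{n-1}$, we have $\hC V_{n-1}=\im(\hC P)$, so $\im\hB+\hC V_{n-1}=\im(\hB,\hC P)$. Hence
$$
U_{2n-1}=\im\big(\hB,\hC P,\hA\hB,\hA\hC P,\ldots,\hA^{n-1}\hB,\hA^{n-1}\hC P\big).
$$
This is precisely the range of the Kalman controllability matrix of $[\hA,(\hB,\hC P)]$. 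Therefore $U_{2n-1}=\dbR^n$ if and only if the Kalman rank condition holds for this pair, i.e. (iii).

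I do not expect a genuine obstacle, since the heavy lifting was done in \autoref{lem:U} and \autoref{thm:G(t,T)}. The point needing care is checking that the identification uses $V_{n-1}$ itself, which \autoref{lem:U-cA} guarantees is the stabilized subspace. Consequently the result does not depend on the choice of basis $P$: any two bases give the same $\im(\hC P)$. One should also remark that, over $\dbC$, Hautus's test for a real pair is equivalent to real controllability, so (iv) and (v) may be stated with complex $\l$ and $x$.
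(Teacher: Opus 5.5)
Your proposal is correct and follows the paper's proof essentially step for step. You get (i)$\Leftrightarrow$(ii) from \autoref{coro:G(T)-controllability} and \autoref{thm:G(t,T)} at $t=0$, and (ii)$\Leftrightarrow$(iii) by using \autoref{lem:U} together with $\im\hB+\hC V_{n-1}=\im(\hB,\hC P)$ to identify $U_{2n-1}$ with the Kalman reachable subspace of $[\hA,(\hB,\hC P)]$; (iii)$\Leftrightarrow$(iv)$\Leftrightarrow$(v) then follow from the classical Kalman--Hautus criterion.
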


\begin{proof}
By \autoref{coro:G(T)-controllability} and \autoref{thm:G(t,T)} with $t=0$, 
(i) and (ii) are equivalent. Since
$$
U_0=\im\hB, \q \im P=V_{n-1},
$$
we have
$$
U_0+\hC V_{n-1}=\im(\hB,\hC P).
$$
Hence, \autoref{lem:U} gives
$$
U_{2n-1} = \sum_{j=0}^{n-1}\hA^j\big(U_0+\hC V_{n-1}\big)
         = \im\Big((\hB,\hC P),\hA(\hB,\hC P),\ldots,\hA^{n-1}(\hB,\hC P)\Big).
$$
Thus, statement (ii) is equivalent to statement (iii).
The equivalence of (iii)--(v) follows from the classical Kalman--Hautus criterion. 
\end{proof}

Since $\im(\hC P)=\hC V_{n-1}$, the criterion does not depend on the particular 
basis chosen for $V_{n-1}$. 
More importantly, at the zero initial time, the centered stochastic dynamics can 
help control the mean dynamics through the additional directions $\hC V_{n-1}$. 
Therefore, even if the mean pair $[\hA,\hB]$ is not controllable by itself, 
the full mean-field system \rf{eq:system} may still be exactly controllable.

\ms 

As an immediate consequence, controllability of the mean equation alone
is sufficient for exact controllability of the mean-field system.

\begin{corollary}\label{coro:hA-hB}
Suppose that the deterministic pair $[\hA,\hB]$ is controllable. Then
system \rf{eq:system} is exactly controllable on $[0,T]$.
\end{corollary}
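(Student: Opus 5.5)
The plan is to reduce the statement directly to \autoref{thm:criterion-U}. If $[\hA,\hB]$ is controllable, then statement (iii) of \autoref{thm:criterion-U} holds, and hence so does (i).

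Concretely, I will use the Kalman-type formulation. Controllability of $[\hA,\hB]$ means
$$
\rank\big(\hB,\hA\hB,\ldots,\hA^{n-1}\hB\big)=n,
$$
that is, $\sum_{j=0}^{n-1}\hA^j\im\hB=\dbR^n$. Since
$$
\im\hB\subseteq\im(\hB,\hC P)=U_0+\hC V_{n-1},
$$
this gives
$$
\dbR^n=\sum_{j=0}^{n-1}\hA^j\im\hB\subseteq\sum_{j=0}^{n-1}\hA^j\big(U_0+\hC V_{n-1}\big)=U_{2n-1},
$$
where the last equality is \autoref{lem:U}. Therefore $U_{2n-1}=\dbR^n$. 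This is statement (ii) of \autoref{thm:criterion-U}, which is equivalent to exact controllability on $[0,T]$.

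Alternatively, one can argue through the Hautus form (iv). Suppose $\hA^{\,\top}x=\l x$, $\hB^\top x=0$ and $P^\top\hC^\top x=0$. Discarding the last condition, the classical Hautus test for the controllable pair $[\hA,\hB]$ already forces $x=0$.

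There is no real obstacle here. The only point needing care is that enlarging the input matrix from $\hB$ to $(\hB,\hC P)$ can only enlarge the reachable subspace, and this is immediate from the inclusion of images above.
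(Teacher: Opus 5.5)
Your proposal is correct and follows essentially the same approach as the paper, which also notes that controllability of $[\hA,\hB]$ implies controllability of the enlarged pair $[\hA,(\hB,\hC P)]$ and then invokes the zero-time Hautus criterion. Your explicit inclusion via the lemma giving $U_{2n-1}=\sum_{j=0}^{n-1}\hA^j(U_0+\hC V_{n-1})$, and your alternative Hautus-form argument, simply spell out that one-line step.
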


\begin{proof}
Let $P$ be as in \autoref{thm:criterion-U}. 
Since $[\hA,\hB]$ is controllable, so is $[\hA,(\hB,\hC P)]$. 
The conclusion follows from \autoref{thm:criterion-U}.
\end{proof}

\section{Exact Controllability at Positive Initial Times}\label{sec:controllability-t>0}

\subsection{Operator and Gramian characterizations}

We now consider exact controllability of system \eqref{eq:system} when the
initial time is positive. Unlike the case $t=0$, an $\sF_t$-measurable
initial state may contain a nontrivial centered random component. 
We first derive operator and Gramian characterizations that separate the mean 
and centered components.

\ms

We begin with two equivalent formulations in which either the initial or the
terminal state is fixed.

\begin{proposition}\label{prop:equivalence-controllability}
Let $0\les t<T<\i$. Then the following statements are equivalent:
\begin{enumerate}[\rm(i)]
\item System \eqref{eq:system} is exactly controllable on $[t,T]$.
\item For every $\xi\in L_{\sF_T}^2(\Om;\dbR^n)$, there exists
      $(u(\cd),z(\cd))\in\dbU[t,T]$ such that
      $$
      X(T;t,0,u,z)=\xi,\q\as
      $$
\item For every $\eta\in L_{\sF_t}^2(\Om;\dbR^n)$, there exists
      $(u(\cd),z(\cd))\in\dbU[t,T]$ such that
      $$
      X(T;t,\eta,u,z)=0,\q\as
      $$
\end{enumerate}
\end{proposition}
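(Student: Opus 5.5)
Since system \eqref{eq:system} is linear in $(\eta,u,z)$, I would rely on superposition:
$$
X(\cd\,;t,\eta,u,z)=X(\cd\,;t,\eta,0,0)+X(\cd\,;t,0,u,z),
$$
which holds because the difference of the two sides solves the homogeneous MF-SDE with zero initial value and therefore vanishes by uniqueness. The implications (i)$\Rightarrow$(ii) and (i)$\Rightarrow$(iii) are immediate: in (ii) take $\eta=0$, and in (iii) take $\xi=0$.

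For (ii)$\Rightarrow$(i), fix $\eta$ and $\xi$. I would set $\xi'\deq\xi-X(T;t,\eta,0,0)$. This lies in $L^2_{\sF_T}(\Om;\dbR^n)$ by the standard $L^2$ estimate for the uncontrolled MF-SDE. By (ii) there is a pair $(u,z)$ with $X(T;t,0,u,z)=\xi'$, and superposition then gives $X(T;t,\eta,u,z)=\xi$.

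For (iii)$\Rightarrow$(i), the key step is to show that every terminal value can be reached from some initial value with some control. Given $\xi$, I would solve the MF-BSDE
$$
dY=[AY+\bA\dbE Y+CZ+\bC\dbE Z]ds+ZdW,\q Y(T)=\xi
$$
on $[t,T]$ with $u=0$, which is well-posed by the standing assumption. The pair $(Y,Z)$ satisfies $Y(t)\in L^2_{\sF_t}(\Om;\dbR^n)$, and $Y(\cd)=X(\cd\,;t,Y(t),0,Z)$, so $X(T;t,Y(t),0,Z)=\xi$. Now fix $\eta$. By (iii), applied to the initial value $\eta-Y(t)$, there is $(u_1,z_1)$ with $X(T;t,\eta-Y(t),u_1,z_1)=0$. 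Superposition then gives
$$
X(T;t,\eta,u_1,z_1+Z)=X(T;t,\eta-Y(t),u_1,z_1)+X(T;t,Y(t),0,Z)=0+\xi=\xi.
$$

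The only point needing care is this backward construction. It uses the control $z$ as the martingale integrand, which is what the backward structure permits, and it needs $Z\in L^2_{\dbF}(t,T;\dbR^n)$, which well-posedness of the linear MF-BSDE supplies. When $t=0$ the value $Y(0)$ is deterministic, which is consistent with $\sF_0$ being trivial, so the argument needs no modification in that case.
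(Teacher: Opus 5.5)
Your proposal is correct and follows essentially the same route as the paper's proof. For (ii)$\Rightarrow$(i) you subtract the free response $X(T;t,\eta,0,0)$ from $\xi$. For (iii)$\Rightarrow$(i) you solve the MF-BSDE with terminal value $\xi$ and $u=0$, steer $\eta-Y(t)$ to zero, and add $Z$ to the diffusion control, so that superposition yields $X(T;t,\eta,u_1,z_1+Z)=\xi$.
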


\begin{proof}
The implications {\rm(i)}$\Rightarrow${\rm(ii)} and
{\rm(i)}$\Rightarrow${\rm(iii)} are immediate.

\ms 

Assume (ii) and let $\eta\in L_{\sF_t}^2(\Om;\dbR^n)$ and
$\xi\in L_{\sF_T}^2(\Om;\dbR^n)$. Set
$$
X_0(\cd)\deq X(\cd\,;t,\eta,0,0).
$$
By (ii), there exists $(u(\cd),z(\cd))\in\dbU[t,T]$ such that
$$
X(T;t,0,u,z)=\xi-X_0(T),\q\as
$$
By linearity,
$$
X(\cd\,;t,\eta,u,z)=X_0(\cd)+X(\cd\,;t,0,u,z),
$$
and hence
$$
X(T;t,\eta,u,z)=\xi,\q\as
$$
Thus (i) follows.

\ms 

Assume (iii). Let $(X_0(\cd),z_0(\cd))$ be the adapted solution of
$$
\left\{\begin{aligned}
dX_0(s) &= [AX_0(s)+\bA\dbE X_0(s)+Cz_0(s)+\bC\dbE z_0(s)]ds+z_0(s)dW(s),\\
 X_0(T) &= \xi.
\end{aligned}\right.
$$
By (iii), there exists a control pair $(u(\cd),z(\cd))\in\dbU[t,T]$ such that
$$
X(T;t,\eta-X_0(t),u,z)=0,\q\as
$$
By linearity,
$$
X(\cd\,;t,\eta,u,z+z_0)=X_0(\cd)+X(\cd\,;t,\eta-X_0(t),u,z),
$$
and therefore
$$
X(T;t,\eta,u,z+z_0)=\xi,\q\as
$$
Hence (i) follows. 
\end{proof}

\begin{remark}
Statement (iii) is the exact null-controllability of
system \eqref{eq:system}; see \cite{yu2021} for the mean-field setting
and \cite{wang2017} for related linear stochastic systems. 
The equivalence of (i) and (iii) was previously established
for $t=0$ in \cite{yu2021}. 
\autoref{prop:equivalence-controllability} extends this result to every
$t\ges0$ and further proves the equivalence of {\rm(i)} and {\rm(ii)}.
Thus, exact controllability may be tested either by steering an arbitrary
initial state to zero or by considering reachability from the zero initial
state.
\end{remark}

Fix $0\les t<T<\i$ and define the bounded linear operator
$\cN_t:\dbU[t,T]\longrightarrow L_{\sF_T}^2(\Om;\dbR^n)$ by
\begin{equation}\label{eq:Nt}
\cN_t(u,z)\deq X(T;t,0,u,z).
\end{equation}
By \autoref{prop:equivalence-controllability}, system \eqref{eq:system}
is exactly controllable on $[t,T]$ if and only if $\cN_t$ is onto. For
bounded operators between Hilbert spaces, this is equivalent to $\cN_t^*$
being bounded below, that is, there exists $\d>0$ such that
$$
\|\cN_t^*\xi\|_{\dbU[t,T]}\ges\d\|\xi\|_{L_{\sF_T}^2},\q\forall\xi\in L_{\sF_T}^2(\Om;\dbR^n).
$$
The following lemma gives an explicit representation of the adjoint
operator $\cN_t^*$ in terms of the associated mean-field backward equation.

\begin{lemma}\label{lem:Nt*}
For $\xi\in L_{\sF_T}^2(\Om;\dbR^n)$, let $(Y(\cd),Z(\cd))$ be the adapted solution of
\begin{equation}\label{eq:dual-system-initial-0-MF}
\left\{\begin{aligned}
dY(s) &= -[A^\top Y(s)+\bA^\top\dbE Y(s)]ds+Z(s)dW(s),\\
 Y(T) &= \xi.
\end{aligned}\right.
\end{equation}
Then
$$
\cN_t^*\xi=\big(B^\top Y+\bB^\top\dbE Y,\,C^\top Y+\bC^\top\dbE Y+Z\big).
$$
\end{lemma}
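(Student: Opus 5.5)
The plan is to verify the defining identity of the adjoint directly: for every $(u,z)\in\dbU[t,T]$ and $\xi\in L^2_{\sF_T}(\Om;\dbR^n)$,
$$
\dbE\lan X(T),\xi\ran=\dbE\int_t^T\Big[\lan u,B^\top Y+\bB^\top\dbE Y\ran+\lan z,C^\top Y+\bC^\top\dbE Y+Z\ran\Big]ds,
$$
where $X(\cd)=X(\cd\,;t,0,u,z)$ and $(Y,Z)$ solves \rf{eq:dual-system-initial-0-MF}. Since $\cN_t$ is bounded and linear, this identity identifies $\cN_t^*\xi$ uniquely. The adjoint equation \rf{eq:dual-system-initial-0-MF} is a linear MF-BSDE with no $Z$ in the driver, so it has a unique adapted solution with $\dbE\sup_s|Y(s)|^2+\dbE\int_t^T|Z|^2ds<\i$. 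The right-hand side is therefore a bounded linear functional of $(u,z)$, and the candidate belongs to $\dbU[t,T]$.

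The key step is to apply It\^o's formula to $\lan X(s),Y(s)\ran$ on $[t,T]$, using $X(t)=0$:
$$
d\lan X,Y\ran=\Big[\lan AX+\bA\dbE X+Bu+\bB\dbE u+Cz+\bC\dbE z,Y\ran-\lan X,A^\top Y+\bA^\top\dbE Y\ran+\lan z,Z\ran\Big]ds+(\cdots)dW.
$$
The stochastic integral $\int(\lan z,Y\ran+\lan X,Z\ran)dW$ is a true martingale. This follows exactly as in the proof of \autoref{prop:EPhiY}: bound $\dbE(\int|\cd|^2)^{1/2}$ via Cauchy--Schwarz using the $\sup$-estimates on $X$ and $Y$. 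Taking expectations, the terms $\lan AX,Y\ran$ and $\lan X,A^\top Y\ran$ cancel pointwise.

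The mean-field terms are handled with the deterministic-mean trick used in \autoref{prop:EPhiY}. We have $\dbE\lan\bA\dbE X,Y\ran=\lan\dbE X,\bA^\top\dbE Y\ran=\dbE\lan X,\bA^\top\dbE Y\ran$, so these terms also cancel. Similarly, $\dbE\lan\bB\dbE u,Y\ran=\dbE\lan u,\bB^\top\dbE Y\ran$ and $\dbE\lan\bC\dbE z,Y\ran=\dbE\lan z,\bC^\top\dbE Y\ran$. Collecting the terms in $u$ and $z$ then yields the displayed identity, which proves the lemma.

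The only mildly delicate point is the martingale justification, together with the interchange of expectation and time integrals via Fubini. Both are routine under the standard square-integrability estimates, and I would simply mirror the estimate already written out in \autoref{prop:EPhiY}.
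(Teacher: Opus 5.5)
Your proposal is correct and takes essentially the same route as the paper. You apply It\^o's formula to $\langle X(\cdot),Y(\cdot)\rangle$ with $X(t)=0$, take expectations, and move the mean-field terms across using the fact that $\dbE X$, $\dbE Y$, $\dbE u$, $\dbE z$ are deterministic; the only difference is that you write out the martingale justification and the term-by-term cancellations, which the paper leaves implicit.
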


\begin{proof}
Let $X(\cd)=X(\cd\,;t,0,u,z)$. It\^o's formula gives
\begin{align*}
\dbE\lan X(T),\xi\ran=\dbE\int_t^T\Big[\lan B^\top Y+\bB^\top\dbE Y,u\ran
+\lan C^\top Y+\bC^\top\dbE Y+Z,z\ran\Big]ds,
\end{align*}
which yields the asserted formula.
\end{proof}

For the centered dynamics, let $\varPsi_t(\cd)$ solve 
\begin{equation}\label{eq:Psi-t}
\left\{\begin{aligned}
d\varPsi_t(s) &= -\varPsi_t(s)A\,ds-\varPsi_t(s)C\,dW(s), \q s\ges t,\\
 \varPsi_t(t) &= I_n,
\end{aligned}\right.
\end{equation}
and define
\begin{equation}\label{eq:bG(t,T)}
\bar G(t,T)\deq\dbE\int_t^T\varPsi_t(s)BB^\top\varPsi_t(s)^\top ds.
\end{equation}
The next lemma separates the mean and centered parts of the homogeneous dual dynamics.

\begin{lemma}\label{lem:representation-MF-SDE}
Let $\eta\in L_{\sF_t}^2(\Om;\dbR^n)$, and let $Y(\cd)$ be the solution of
\begin{equation}\label{eq:homogeneous-dual}
\left\{\begin{aligned}
dY(s) &=-[A^\top Y(s)+\bA^\top\dbE Y(s)]ds
        -[C^\top Y(s)+\bC^\top\dbE Y(s)]dW(s), \q s\in[t,T],\\
 Y(t) &=\eta.
\end{aligned}\right.
\end{equation}
Then
$$
Y(s)=\F_t(s)^\top\dbE\eta+\varPsi_t(s)^\top(\eta-\dbE\eta),\q s\in[t,T],
$$
where $\F_t(\cd)$ is the solutio of \rf{eq:Phi-t}. 
\end{lemma}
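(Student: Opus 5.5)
Since \rf{eq:homogeneous-dual} is linear with Lipschitz coefficients, it has a unique solution. The plan is to show that the candidate
$$
\tilde Y(s)\deq\F_t(s)^\top\dbE\eta+\varPsi_t(s)^\top(\eta-\dbE\eta),\q s\in[t,T],
$$
solves \rf{eq:homogeneous-dual}, and then conclude $Y=\tilde Y$ by uniqueness. First we check that $\tilde Y$ is an admissible candidate. $\dbE\eta$ is a deterministic vector, and $\eta-\dbE\eta$ is $\sF_t$-measurable and square integrable. For $s\ges t$, the processes $\F_t(s)$ and $\varPsi_t(s)$ are driven by the increments $W(r)-W(t)$, $r\in[t,s]$, which are independent of $\sF_t$. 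Hence $\tilde Y$ is adapted and square integrable, since $\dbE\sup_s|\varPsi_t(s)|^2<\i$ and $\varPsi_t$ is independent of $\eta$. Moreover $\tilde Y(t)=\dbE\eta+(\eta-\dbE\eta)=\eta$.

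The key step is computing $\dbE\tilde Y(s)$. For the first term, $\dbE[\F_t(s)^\top]\dbE\eta=e^{-(s-t)\hA^\top}\dbE\eta$. For the second term, independence gives
$$
\dbE[\varPsi_t(s)^\top(\eta-\dbE\eta)]=\dbE[\varPsi_t(s)^\top]\,\dbE[\eta-\dbE\eta]=0 .
$$
This factorization is the only delicate point: it needs $\varPsi_t(s)$ to be measurable with respect to $\si(W(r)-W(t);r\in[t,s])$. That holds because $\varPsi_t$ is the unique strong solution of an SDE started at $t$ with deterministic data. Consequently $\dbE\tilde Y(s)=\dbE[\F_t(s)]^\top\dbE\eta$.

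Next we transpose \rf{eq:Phi-t} and \rf{eq:Psi-t}:
$$
d\F_t^\top=-[A^\top\F_t^\top+\bA^\top(\dbE\F_t)^\top]ds-[C^\top\F_t^\top+\bC^\top(\dbE\F_t)^\top]dW,
\q
d\varPsi_t^\top=-A^\top\varPsi_t^\top ds-C^\top\varPsi_t^\top dW .
$$
Multiplying the first equation by the constant vector $\dbE\eta$, and the second by the $\sF_t$-measurable random vector $\eta-\dbE\eta$, is legitimate for $s\ges t$ because that vector is fixed at time $t$ and can be taken inside the stochastic integral. Adding the two gives
$$
d\tilde Y=-[A^\top\tilde Y+\bA^\top(\dbE\F_t)^\top\dbE\eta]ds-[C^\top\tilde Y+\bC^\top(\dbE\F_t)^\top\dbE\eta]dW.
$$
By the previous step, $(\dbE\F_t)^\top\dbE\eta=\dbE\tilde Y$, so $\tilde Y$ satisfies exactly \rf{eq:homogeneous-dual}.

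Uniqueness of solutions to linear MF-SDEs then gives $Y=\tilde Y$. The main point requiring care is the independence argument used to compute $\dbE\tilde Y$; everything else is direct differentiation.
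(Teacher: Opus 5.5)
Your proposal is correct and follows the paper's proof: you build the same candidate $\F_t(s)^\top\dbE\eta+\varPsi_t(s)^\top(\eta-\dbE\eta)$ and use independence of $\varPsi_t$ from $\sF_t$ to get its mean $[\dbE\F_t(s)]^\top\dbE\eta$. As in the paper, you then check via the transposed equations that it solves \rf{eq:homogeneous-dual} with initial value $\eta$, and conclude by uniqueness.
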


\begin{proof}
Set
$$
m\deq\dbE\eta, \q \eta_0\deq\eta-\dbE\eta,
$$
and define
$$
\widetilde Y(s)\deq\F_t(s)^\top m+\varPsi_t(s)^\top\eta_0.
$$
Since $\F_t(\cd)$ and $\varPsi_t(\cd)$ depend only on the Brownian
increments after time $t$, they are independent of $\sF_t$. Hence,
using $\dbE\eta_0=0$,
$$
\dbE[\varPsi_t(s)^\top\eta_0]=0,
$$
and therefore
$$
\dbE\widetilde Y(s)=[\dbE\F_t(s)]^\top m.
$$
Taking transposes in the equations for $\F_t$ and $\varPsi_t$ gives
\begin{align*}
d\F_t^\top 
&=-[A^\top\F_t^\top+\bA^\top(\dbE\F_t)^\top]ds-[C^\top\F_t^\top+\bC^\top(\dbE\F_t)^\top]dW,\\
d\varPsi_t^\top 
&=-A^\top\varPsi_t^\top ds-C^\top\varPsi_t^\top dW.
\end{align*}
Consequently,
\begin{align*}
d\widetilde Y
&=-\big[A^\top\big(\F_t^\top m+\varPsi_t^\top\eta_0\big)+\bA^\top(\dbE\F_t)^\top m\big]ds
  -\big[C^\top\big(\F_t^\top m+\varPsi_t^\top\eta_0\big)+\bC^\top(\dbE\F_t)^\top m\big]dW \\
&=-\big[A^\top\widetilde Y+\bA^\top\dbE\widetilde Y\big]ds
  -\big[C^\top\widetilde Y+\bC^\top\dbE\widetilde Y\big]dW. 
\end{align*}
Moreover, $\widetilde Y(t)=m+\eta_0=\eta$.
Thus $\widetilde Y$ solves \eqref{eq:homogeneous-dual} with the same
initial condition as $Y$. By uniqueness, $Y(s)=\widetilde Y(s)$.
\end{proof}

We are now in a position to give the main result of this subsection, 
which connects exact controllability at a positive initial time with 
an observability estimate for $\cN_t^*$ and with the invertibility of 
the Gramians corresponding to the mean and centered dynamics.

\begin{theorem}\label{thm:MF-controllability-t-T}
Let $0<t<T<\i$. Then the following statements are equivalent:
\begin{enumerate}[\rm(i)]
\item System \eqref{eq:system} is exactly controllable on $[t,T]$.
\item There exists $\d>0$ such that
      $$
      \|\cN_t^*\xi\|_{\dbU[t,T]}\ges\d\|\xi\|_{L_{\sF_T}^2},
      \q\forall\xi\in L_{\sF_T}^2(\Om;\dbR^n).
      $$
\item Both $G(t,T)$ and $\bar G(t,T)$ are invertible. 
\end{enumerate}
\end{theorem}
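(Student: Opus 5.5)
(i)$\Leftrightarrow$(ii) is already available: by \autoref{prop:equivalence-controllability}, exact controllability on $[t,T]$ means $\cN_t$ is onto, and by the closed range theorem this is equivalent to $\cN_t^*$ being bounded below. The real work is (ii)$\Leftrightarrow$(iii). I will pass to an equivalent observability inequality stated in terms of the homogeneous forward dual equation \eqref{eq:homogeneous-dual} with initial datum $\eta\in L^2_{\sF_t}(\Om;\dbR^n)$.

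\emph{Step 1 (reduction to forward dual dynamics).} Given $\xi$, let $(Y,Z)$ solve \eqref{eq:dual-system-initial-0-MF}. Set $\tilde Z\deq C^\top Y+\bC^\top\dbE Y+Z$. Then $Y$ solves
$$
dY=-[A^\top Y+\bA^\top\dbE Y]ds-[C^\top Y+\bC^\top \dbE Y]dW+\tilde Z\,dW .
$$
By \autoref{lem:Nt*}, $\cN_t^*\xi=(B^\top Y+\bB^\top\dbE Y,\tilde Z)$.

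I claim (ii) is equivalent to the estimate
\begin{equation*}
\dbE\int_t^T|B^\top Y_\eta+\bB^\top\dbE Y_\eta|^2ds\ges\d'\dbE|\eta|^2,\q\forall\eta\in L^2_{\sF_t}(\Om;\dbR^n),
\end{equation*}
where $Y_\eta$ solves \eqref{eq:homogeneous-dual}.

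\begin{itemize}
\item For (ii)$\Rightarrow$this estimate, take $\xi=Y_\eta(T)$. Then $\tilde Z=0$. Forward stability gives $\dbE|Y_\eta(T)|^2\ges K^{-1}\dbE|\eta|^2$, because the homogeneous forward equation can be solved backward in time pathwise in the sense of a well-posed linear MF-SDE with $L^2$ bounds in both directions.
\item For the converse, decompose $Y=Y_{Y(t)}+\hat Y$, where $\hat Y$ is driven by $\tilde Z$ with $\hat Y(t)=0$. Then $\dbE\sup|\hat Y|^2\les K\|\tilde Z\|^2$. Combining this with the estimate and with $\dbE|\xi|^2\les K(\dbE|Y(t)|^2+\|\tilde Z\|^2)$ gives (ii) by a standard absorbing argument.
\end{itemize}

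\emph{Step 2 (splitting mean and centered parts).} By \autoref{lem:representation-MF-SDE}, $Y_\eta(s)=\F_t(s)^\top m+\varPsi_t(s)^\top\eta_0$ with $m=\dbE\eta$ and $\eta_0=\eta-\dbE\eta$. Moreover $\dbE Y_\eta(s)=[\dbE\F_t(s)]^\top m$.

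Hence
$$
B^\top Y_\eta+\bB^\top\dbE Y_\eta=\big(B^\top\F_t^\top+\bB^\top(\dbE\F_t)^\top\big)m+B^\top\varPsi_t^\top\eta_0 .
$$
The second term has zero mean conditionally on $\sF_t$, since $\varPsi_t$ is independent of $\sF_t$ and $\dbE\eta_0=0$; the first is a random variable independent of $\sF_t$ multiplied by a deterministic vector. The cross term $2\dbE\lan(\cdots)m,B^\top\varPsi_t^\top\eta_0\ran$ therefore factorizes through $\dbE\eta_0=0$ and vanishes.

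Conditioning on $\sF_t$ in the second term (independence again) gives
$$
\dbE\int_t^T|B^\top\varPsi_t^\top\eta_0|^2ds=\dbE[\eta_0^\top\bar G(t,T)\eta_0].
$$
Thus the left side of the observability estimate equals
$$
m^\top G(t,T)m+\dbE[\eta_0^\top\bar G(t,T)\eta_0],
$$
while $\dbE|\eta|^2=|m|^2+\dbE|\eta_0|^2$.

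\emph{Step 3 (conclusion).} If $G(t,T)$ and $\bar G(t,T)$ are both invertible, the estimate holds with $\d'$ equal to the minimum of their smallest eigenvalues.

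Conversely, take $\eta=m$ deterministic to get invertibility of $G(t,T)$. To get invertibility of $\bar G(t,T)$, take $\eta=\eta_0=\b\,\mathrm{sgn}(W(t))$ for a vector $\b$. This is the only place where $t>0$ is used: $\sF_t$ must support a nonzero centered random variable, which fails at $t=0$.

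\emph{Main obstacle.} I expect Step 1 to be the main obstacle, namely transferring the lower bound between terminal data $\xi$ and initial data $\eta$. It needs the reverse estimate $\dbE|Y_\eta(T)|^2\ges c\,\dbE|\eta|^2$ for the mean-field forward equation. I would obtain it by writing the time-reversed relation through $\F_t,\varPsi_t$ and using invertibility of these fundamental solutions. Alternatively, I would argue via the MF-BSDE with terminal value $Y_\eta(T)$, whose unique solution is $(Y_\eta,0)$ up to the shift, and then apply the standard BSDE estimate $\dbE|Y(t)|^2\les K\dbE|\xi|^2$. That route gives the reverse estimate cleanly.
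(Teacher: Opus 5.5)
Your proof is correct and is essentially the paper's argument. The (iii)$\Rightarrow$(ii) direction coincides with the paper's: you split the BSDE solution into the homogeneous forward solution started from $Y(t)$ plus a remainder driven by $C^\top Y+\bC^\top\dbE Y+Z$, and you use the identity $m^\top G(t,T)m+\dbE[\eta_0^\top\bar G(t,T)\eta_0]$. For (ii)$\Rightarrow$(iii) you differ only slightly, routing through a quantitative forward observability inequality that costs you the reverse bound $\dbE|Y_\eta(T)|^2\ges c\,\dbE|\eta|^2$; your MF-BSDE uniqueness-plus-$L^2$-estimate argument supplies this correctly. The paper instead argues by contradiction with the same test data $\F_t(\cd)^\top v$ and $\varPsi_t(\cd)^\top\th v$, and only needs that $\xi=0$ forces $v=0$.

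One small slip: $B^\top\varPsi_t(s)^\top\eta_0$ does not have zero conditional mean given $\sF_t$, since that conditional mean is $B^\top e^{-(s-t)A^\top}\eta_0$. The cross term still vanishes for the reason you actually use, namely independence of $\F_t,\varPsi_t$ from $\sF_t$ together with $\dbE\eta_0=0$.
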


\begin{proof}
The equivalence of (i) and (ii) follows from
\autoref{prop:equivalence-controllability} and the standard surjectivity
criterion for bounded linear operators.

\ms

We prove (ii)$\Rightarrow$(iii). If $G(t,T)$ is singular,
choose $0\ne v\in\ker G(t,T)$ and set 
$$
Y(s)\deq \F_t(s)^\top v,\q \xi\deq Y(T), \q Z(s)\deq -[C^\top Y(s)+\bC^\top\dbE Y(s)].
$$
Then $(Y(\cd),Z(\cd))$ solves \eqref{eq:dual-system-initial-0-MF}, and \autoref{lem:Nt*} gives
$$
\|\cN_t^*\xi\|_{\dbU[t,T]}^2=v^\top G(t,T)v=0.
$$
By (ii), $\xi=0$. Taking expectations yields $e^{-(T-t)\hA^\top}v=0$, contradicting $v\ne0$. 

\ms 

If $\bar G(t,T)$ is singular, choose $0\ne v\in\ker\bar G(t,T)$ and set
$$
\th\deq\frac{W(t)}{\sqrt t}, \q Y(s)\deq\varPsi_t(s)^\top\th v.
$$
Then $\dbE\th=0$, $\dbE\th^2=1$, and the independence of
$\varPsi_t(\cd)$ from $\sF_t$ gives $\dbE Y(s)=0$. With
$$
\xi\deq Y(T),\q Z(s)\deq-C^\top Y(s),
$$
we have by \autoref{lem:Nt*},
\begin{align*}
\|\cN_t^*\xi\|_{\dbU[t,T]}^2
&=\dbE\int_t^T|B^\top Y(s)|^2ds=\dbE\int_t^T\th^2|B^\top\varPsi_t(s)^\top v|^2ds\\
&=\dbE\th^2\,\dbE\int_t^T|B^\top\varPsi_t(s)^\top v|^2ds =v^\top\bar G(t,T)v=0,
\end{align*}
where we used the independence of $\th$ and $\varPsi_t(\cd)$ and
$\dbE\th^2=1$. By (ii), it follows that $\xi=0$. On the other hand,
$$
0=\xi=Y(T)=\th\,\varPsi_t(T)^\top v.
$$
Since $\th\ne0$ almost surely and $\varPsi_t(T)$ is invertible almost surely, 
the above implies $v=0$, a contradiction.  

\ms 

It remains to prove (iii)$\Rightarrow$(ii). 
Let $(Y(\cd),Z(\cd))$ be the adapted solution to \eqref{eq:dual-system-initial-0-MF} and set
$$
\cO(s)\deq B^\top Y(s)+\bB^\top\dbE Y(s),
\q
\D(s)\deq C^\top Y(s)+\bC^\top\dbE Y(s)+Z(s).
$$
Let $\widetilde Y(\cd)$ solve \eqref{eq:homogeneous-dual} with
$\widetilde Y(t)=Y(t)$ and define
$$
\widetilde\cO(s) \deq B^\top\widetilde Y(s)+\bB^\top\dbE\widetilde Y(s), \q 
m\deq\dbE Y(t), \q \eta_0\deq Y(t)-\dbE Y(t).
$$
By \autoref{lem:representation-MF-SDE},
$$
\widetilde Y(s)=\F_t(s)^\top m+\varPsi_t(s)^\top\eta_0.
$$
Since $\eta_0$ is $\sF_t$-measurable with $\dbE\eta_0=0$, whereas
$\F_t(\cd)$ and $\varPsi_t(\cd)$ depend only on the Brownian increments
after time $t$, we have
$$
\dbE\widetilde Y(s)=[\dbE\F_t(s)]^\top m.
$$
Hence, 
\begin{align*}
\widetilde\cO(s)
=B^\top\widetilde Y(s)+\bB^\top\dbE\widetilde Y(s)
=\big\{\F_t(s)B+[\dbE\F_t(s)]\bB\big\}^\top m+B^\top\varPsi_t(s)^\top\eta_0.
\end{align*}
The cross term vanishes after taking expectations, since $\eta_0$ has
zero mean and is independent of the future Brownian increments. Therefore,
by the definitions of $G(t,T)$ and $\bar G(t,T)$,
\begin{align*}
\dbE\int_t^T|\widetilde\cO(s)|^2ds
&=m^\top G(t,T)m + \dbE\big[\eta_0^\top\bar G(t,T)\eta_0\big] \\
&=[\dbE Y(t)]^\top G(t,T)\dbE Y(t)+
\dbE\Big\{[Y(t)-\dbE Y(t)]^\top\bar G(t,T)[Y(t)-\dbE Y(t)]\Big\}.
\end{align*}
Since both Gramians are positive definite, there exists $c>0$ such that
$$
c\,\dbE|Y(t)|^2\les\dbE\int_t^T|\widetilde\cO(s)|^2ds.
$$
Set $R=Y-\widetilde Y$. Then
$$
dR=-[A^\top R+\bA^\top\dbE R]ds-[C^\top R+\bC^\top\dbE R]dW+\D\,dW, \q R(t)=0.
$$
Standard estimates give
$$
\dbE\sup_{t\les s\les T}|R(s)|^2\les K\dbE\int_t^T|\D(s)|^2ds.
$$
Since
$$
\widetilde\cO=\cO-[B^\top R+\bB^\top\dbE R],
$$
we obtain
$$
\dbE|Y(t)|^2\les K\dbE\int_t^T\big(|\cO(s)|^2+|\D(s)|^2\big)ds.
$$
Finally, since
$$
Z=\D-C^\top Y-\bC^\top\dbE Y,
$$
the equation for $Y$ can be written as
$$
dY=-[A^\top Y+\bA^\top\dbE Y]ds+[\D-C^\top Y-\bC^\top\dbE Y]dW.
$$
The standard $L^2$ estimate yields
$$
\dbE|\xi|^2
\les K\lt(\dbE|Y(t)|^2+\dbE\int_t^T|\D(s)|^2ds\rt)
\les K\|\cN_t^*\xi\|_{\dbU[t,T]}^2.
$$
Thus $\cN_t^*$ is bounded below. 
\end{proof}

\subsection{Interval independence and the positive-time Hautus criterion}

Because the coefficients are constant, the fundamental matrices are invariant
in law under time shifts. To see this, for $r\ges0$, set
$$
W_t(r)\deq W(t+r)-W(t), \q
\widehat\F_t(r)\deq\F_t(t+r), \q
\widehat\varPsi_t(r)\deq\varPsi_t(t+r).
$$
Then $W_t(\cd)$ is a standard Brownian motion. 
By \eqref{eq:Phi-t}, $\widehat\F_t(\cd)$ satisfies
$$
\left\{\begin{aligned}
d\widehat\F_t(r) &= -\big\{\widehat\F_t(r)A+[\dbE\widehat\F_t(r)]\bA\big\}dr 
                    -\big\{\widehat\F_t(r)C+[\dbE\widehat\F_t(r)]\bC\big\}dW_t(r),\\
 \widehat\F_t(0) &= I_n.
\end{aligned}\right.
$$
Thus, $\widehat\F_t(\cd)$ satisfies the same equation as $\F_0(\cd)$,
with $W_t(\cd)$ in place of $W(\cd)$. Since $W_t(\cd)$ is again a
standard Brownian motion, uniqueness in law gives
$$
\{\F_t(t+r):r\ges0\} \stackrel{d}{=} \{\F_0(r):r\ges0\}.
$$
Similarly, we have
$$
\{\varPsi_t(t+r):r\ges0\} \stackrel{d}{=} \{\varPsi_0(r):r\ges0\}.
$$
Consequently,
\begin{equation}\label{eq:shifted-Gramians}
G(t,T)=G(0,T-t),
\q
\bar G(t,T)=\bar G(0,T-t).
\end{equation}
Thus, the Gramians depend on the interval only through its length. Together
with the finite-dimensional criteria, this will imply that their invertibility
is independent of every positive control horizon.

\ms 

For the non-mean-field SDE \eqref{eq:system-SDE}, the equivalence of the
Gramian, finite-dimensional, and Hautus criteria at the zero initial time
was established in \cite{sun2026}. The next proposition shows that the same
characterizations remain valid for every positive initial time. Consequently,
exact controllability of \eqref{eq:system-SDE} is independent of the initial
time and, in fact, of the particular control interval.

\begin{proposition}\label{prop:SDE-controllability}
Let $0\les t<T<\i$. Then the following statements are equivalent:
\begin{enumerate}[\rm(i)]
\item System \eqref{eq:system-SDE} is exactly controllable on $[t,T]$.
\item $\bar G(t,T)$ is invertible.
\item $V_{n-1}=\dbR^n$.
\item $B^\top H\ne0$ for every nonzero positive-semidefinite eigenmatrix $H$ of
      $$
      \cL_{(-A,C)}(M)\deq-MA-A^\top M+C^\top MC,\q M\in\dbS^n.
      $$
\end{enumerate}
\end{proposition}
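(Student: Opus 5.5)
The plan is to treat \eqref{eq:system-SDE} as the special case $\bA=\bB=\bC=0$ of \eqref{eq:system} and reuse the machinery of this section. With $\bA=\bB=\bC=0$ we have $\F_t=\varPsi_t$, and the Gramian \eqref{eq:G(t,T)} becomes $\bar G(t,T)$. The duality in \autoref{lem:Nt*} then reads $\cN_t^*\xi=(B^\top Y,\,C^\top Y+Z)$ with $dY=-A^\top Y\,ds+Z\,dW$. By \autoref{prop:equivalence-controllability}, exact controllability on $[t,T]$ is equivalent to $\cN_t^*$ being bounded below, and this holds for every $t\ges0$.

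For (ii)$\Leftrightarrow$(iii), I would copy the proof of \autoref{thm:G(t,T)}. For $\b\in\dbR^n$ we have $\b^\top\bar G(t,T)\b=\dbE\int_t^T|B^\top\varPsi_t(s)^\top\b|^2ds$, and $Y(s)=\varPsi_t(s)^\top\b$ solves the homogeneous equation in \autoref{lem:zero-condition-SDE}. By continuity, $\b^\top\bar G(t,T)\b=0$ exactly when $B^\top Y\equiv0$, which by \autoref{lem:zero-condition-SDE} is exactly when $\b\in V_{n-1}^\perp$. Hence $\ker\bar G(t,T)=V_{n-1}^\perp$, and the equivalence follows.

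For (i)$\Leftrightarrow$(ii), I would give a separate argument that does not depend on the mean part.
\begin{itemize}
\item (i)$\Rightarrow$(ii): take $0\ne v\in\ker\bar G(t,T)$ and $Y(s)=\varPsi_t(s)^\top v$, so that $\xi=Y(T)$ and $Z=-C^\top Y$. Then $\cN_t^*\xi=0$ while $\xi\ne0$, because $\varPsi_t(T)$ is invertible. This contradicts bounded-belowness.
\item (ii)$\Rightarrow$(i): follow the proof of (iii)$\Rightarrow$(ii) in \autoref{thm:MF-controllability-t-T} with the mean terms removed. Let $\widetilde Y$ solve the homogeneous equation with $\widetilde Y(t)=Y(t)$, so $\widetilde Y(s)=\varPsi_t(s)^\top Y(t)$.
\item By independence of $\varPsi_t$ from $\sF_t$, we get $\dbE\int_t^T|B^\top\widetilde Y|^2ds=\dbE[Y(t)^\top\bar G(t,T)Y(t)]\ges c\,\dbE|Y(t)|^2$.
\item The remainder $R=Y-\widetilde Y$ is controlled by $\D=C^\top Y+Z$. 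The forward $L^2$ estimate then yields $\dbE|\xi|^2\les K\|\cN_t^*\xi\|^2$.
\end{itemize}
This argument works for every $t\ges0$, including $t=0$, because no decomposition into mean and centered parts is needed. This is precisely why the initial time plays no role for the SDE.

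For (iii)$\Leftrightarrow$(iv), I would invoke \cite{sun2026}, where at $t=0$ the Hautus criterion (iv) is shown to be equivalent to $V_{n-1}=\dbR^n$. This is a purely algebraic statement about $(A,B,C)$, so it transfers unchanged. If a self-contained argument were wanted, this is the main obstacle. I would first show that $V_{n-1}^\perp\ne\{0\}$ produces a nonzero PSD eigenmatrix. The subspace $V_{n-1}^\perp$ is invariant under $A^\top$ and $C^\top$, so the map $H\mapsto\cL_{(-A,C)}(H)$ preserves the cone of PSD matrices with range in $V_{n-1}^\perp$, after shifting by a multiple of the identity to make it positive. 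A Krein--Rutman/Perron--Frobenius argument on this cone then yields an eigenmatrix $H$ satisfying $B^\top H=0$. The converse, extracting an invariant subspace from the range of a PSD eigenmatrix, is the delicate part, and I would defer it to \cite{sun2026}.
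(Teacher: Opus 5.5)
Your proof is correct. For $t>0$ it matches the paper's argument: the paper also sets $\bA=\bB=\bC=0$ in \autoref{thm:MF-controllability-t-T}, so that $\F_t=\varPsi_t$ and $G(t,T)=\bar G(t,T)$, to get (i)$\Leftrightarrow$(ii), and it cites \cite{sun2026} for (iii)$\Leftrightarrow$(iv).

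There are two differences.
\begin{enumerate}
\item \emph{The case $t=0$.} The paper handles $t=0$ entirely by citing \cite{sun2026}. It confines its own argument to $t>0$ because \autoref{thm:MF-controllability-t-T} is stated only there; its $\bar G$-step uses the centered variable $\th=W(t)/\sqrt t$. You correctly observe that once $\F_t=\varPsi_t$, a deterministic $v\in\ker\bar G(t,T)$ already gives $\xi=\varPsi_t(T)^\top v\neq0$ with $\cN_t^*\xi=0$. You also note that the observability estimate via $\widetilde Y=\varPsi_t^\top Y(t)$ and the remainder $R$ never needs a mean/centered split. So your argument covers $t=0$ as well, makes (i)--(iii) self-contained at every $t\ges0$, and shows directly why the initial time is irrelevant for \eqref{eq:system-SDE}.
\item \emph{The step (ii)$\Leftrightarrow$(iii).} The paper goes through \autoref{thm:G(t,T)}, hence through the $2n$-dimensional block system, together with the fact that the $U_k$ and $V_k$ recursions coincide when the barred coefficients vanish. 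You apply \autoref{lem:zero-condition-SDE} directly to get $\ker\bar G(t,T)=V_{n-1}^\perp$, which is shorter.
\end{enumerate}

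Two corrections to your optional sketch of (iii)$\Leftrightarrow$(iv); neither affects your main line, which defers this equivalence to \cite{sun2026} as the paper does.
\begin{enumerate}
\item \emph{The shift does not give positivity.} $H\mapsto\cL_{(-A,C)}(H)+\mu H$ need not preserve the positive-semidefinite cone for any $\mu$. For example, $A=\bp0&1\\0&0\ep$, $C=0$, $H=e_1e_1^\top$ gives $\bp\mu&-1\\-1&0\ep$, which is never positive semidefinite. What does hold is that $\cL_{(-A,C)}$ is resolvent positive: $-HA-A^\top H$ generates the congruence $e^{-sA^\top}He^{-sA}$, and $H\mapsto C^\top HC$ preserves the cone. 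Krein--Rutman should therefore be applied to $(\mu-\cL_{(-A,C)})^{-1}$ for large $\mu$.
\item \emph{The direction you call delicate is the easy one.} Suppose $H\in\bar\dbS^n_+$, $H\neq0$, $\cL_{(-A,C)}(H)=\l H$ and $HB=0$. For $x\in\ker H$ we get $(Cx)^\top H(Cx)=x^\top\cL_{(-A,C)}(H)x=0$, so $HCx=0$. Then $HAx=-\cL_{(-A,C)}(H)x=-\l Hx=0$. Thus $\ker H$ is a proper subspace that contains $\im B$ and is invariant under $A$ and $C$, which forces $V_{n-1}\subseteq\ker H\neq\dbR^n$.
\end{enumerate}
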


\begin{proof}
For $t=0$, the equivalence of (i)--(iv) was established in \cite{sun2026}. 
It therefore remains to consider the case $t>0$. Setting
$$
\bA=\bB=\bC=0
$$
in \autoref{thm:MF-controllability-t-T}, we have
$$
\F_t=\varPsi_t, \q G(t,T)=\bar G(t,T).
$$
Hence, (i)$\Leftrightarrow$(ii). Moreover, when $\bA=\bB=\bC=0$,
the recursions defining $U_k$ and $V_k$ coincide. Thus, by \autoref{thm:G(t,T)},
(ii)$\Leftrightarrow$(iii). 
Finally, the equivalence of (iii) and (iv) is precisely the
stochastic Hautus criterion established in \cite{sun2026}.
\end{proof}

\begin{corollary}\label{coro:MF-SDE-t>0-SDE}
Let $0<t<T<\i$. If system \eqref{eq:system} is exactly controllable on
$[t,T]$, then system \eqref{eq:system-SDE} is exactly controllable on every
interval $[t_1,t_2]$ with $0\les t_1<t_2<\i$.
\end{corollary}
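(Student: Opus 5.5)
The corollary follows by chaining the Gramian characterization at positive initial times with the interval-independent criteria for the non-mean-field SDE. We begin with the hypothesis: system \eqref{eq:system} is exactly controllable on $[t,T]$ with $t>0$. By \autoref{thm:MF-controllability-t-T}, direction (i)$\Rightarrow$(iii), both $G(t,T)$ and $\bar G(t,T)$ are invertible, and we retain only the invertibility of $\bar G(t,T)$.

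Next we observe that $\bar G(t,T)$, defined in \eqref{eq:bG(t,T)}, involves only $A$, $B$, $C$ through $\varPsi_t$. Hence it coincides with the Gramian of the non-mean-field SDE \eqref{eq:system-SDE} on $[t,T]$. Applying \autoref{prop:SDE-controllability} on the interval $[t,T]$, direction (ii)$\Rightarrow$(iii), we conclude that $V_{n-1}=\dbR^n$.

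The condition $V_{n-1}=\dbR^n$ is purely algebraic and makes no reference to any interval. For arbitrary $0\les t_1<t_2<\i$, we therefore apply \autoref{prop:SDE-controllability} again, now on $[t_1,t_2]$, direction (iii)$\Rightarrow$(i). This shows that system \eqref{eq:system-SDE} is exactly controllable on $[t_1,t_2]$, which proves the corollary.

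No step here is a real obstacle, because everything reduces to citing earlier results. The one point that requires care is that \autoref{prop:SDE-controllability} is stated for an arbitrary interval, including the case $t_1=0$. That case relies on \cite{sun2026}, while the case $t_1>0$ was handled within the proposition itself. As an alternative to passing through $V_{n-1}$, one could instead use \eqref{eq:shifted-Gramians}, which gives $\bar G(t,T)=\bar G(0,T-t)$, together with the finite-dimensional criterion. The route through $V_{n-1}=\dbR^n$ is cleaner, since it removes the dependence on the interval length at once.
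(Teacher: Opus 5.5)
Your proposal is correct and follows essentially the same route as the paper. The paper also takes the invertibility of $\bar G(t,T)$ from \autoref{thm:MF-controllability-t-T} and then invokes \autoref{prop:SDE-controllability}; you simply spell out the passage through the interval-independent condition $V_{n-1}=\dbR^n$, which the paper's one-line proof leaves implicit.
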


\begin{proof}
By \autoref{thm:MF-controllability-t-T}, $\bar G(t,T)$ is invertible.
The conclusion follows from \autoref{prop:SDE-controllability}.
\end{proof}

We are now ready to state the main result of this section. 
It gives a complete finite-dimensional and spectral characterization of 
exact controllability at positive initial times. 
In particular, it shows that controllability on one positive-time interval 
is equivalent to controllability on every such interval, 
and that the obstruction consists of two independent parts: 
a stochastic Hautus condition for the centered dynamics and 
a classical Hautus condition for the mean dynamics.

\begin{theorem}\label{thm:controllability-t>0}
The following statements are equivalent:
\begin{enumerate}[\rm(i)]
\item System \eqref{eq:system} is exactly controllable on every interval
      $[t,T]$ with $0<t<T<\i$. 
\item System \eqref{eq:system} is exactly controllable on some interval
      $[t,T]$ with $0<t<T<\i$.
\item $U_{2n-1}=V_{n-1}=\dbR^n$. 
\item $V_{n-1}=\dbR^n$ and the deterministic pair $[\hA,(\hB,\hC)]$ is controllable.
\item Both of the following spectral conditions hold:
      \begin{enumerate}[\rm(a)]
      \item $B^\top H\ne0$ for every nonzero positive-semidefinite
            eigenmatrix $H$ of $\cL_{(-A,C)}$;
      \item for every $\l\in\dbC$ and $x\in\dbC^n$,
            $$
            \hA^\top x=\l x, \q\hB^\top x=0, \q\hC^\top x=0 \q\Longrightarrow\q x=0,
            $$
            or equivalently, 
            $$
            \rank(\l I_n-\hA,\hB,\hC)=n,\q\forall\l\in\si(\hA).
            $$
      \end{enumerate}
\end{enumerate}
\end{theorem}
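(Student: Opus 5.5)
The plan is to route everything through \autoref{thm:MF-controllability-t-T}, \autoref{thm:G(t,T)} and \autoref{prop:SDE-controllability}. First, for any $0<t<T<\i$, \autoref{thm:MF-controllability-t-T} shows that exact controllability on $[t,T]$ is equivalent to invertibility of both $G(t,T)$ and $\bar G(t,T)$. By \autoref{thm:G(t,T)}, $G(t,T)$ is invertible iff $U_{2n-1}=\dbR^n$. By \autoref{prop:SDE-controllability}, (ii)$\Leftrightarrow$(iii), $\bar G(t,T)$ is invertible iff $V_{n-1}=\dbR^n$. Both algebraic conditions are independent of $(t,T)$. Hence (ii)$\Rightarrow$(iii)$\Rightarrow$(i), while (i)$\Rightarrow$(ii) is trivial. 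No use of \eqref{eq:shifted-Gramians} is needed, though it gives a consistency check.

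Next, (iii)$\Leftrightarrow$(iv). Assume $V_{n-1}=\dbR^n$. We may take $P=I_n$ in \autoref{lem:U}, since only $\im P=V_{n-1}$ matters. Then \autoref{lem:U} gives
$$
U_{2n-1}=\sum_{j=0}^{n-1}\hA^j\big(\im\hB+\hC\dbR^n\big)=\im\Big((\hB,\hC),\hA(\hB,\hC),\ldots,\hA^{n-1}(\hB,\hC)\Big).
$$
So $U_{2n-1}=\dbR^n$ exactly when the Kalman matrix of $[\hA,(\hB,\hC)]$ has rank $n$, i.e. when that pair is controllable. Both (iii) and (iv) contain $V_{n-1}=\dbR^n$, and under it the remaining parts coincide, so the two statements are equivalent.

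Finally, (iv)$\Leftrightarrow$(v). By \autoref{prop:SDE-controllability}, (iii)$\Leftrightarrow$(iv), $V_{n-1}=\dbR^n$ is equivalent to condition (a), the stochastic Hautus condition of \cite{sun2026}. By the classical Hautus lemma, controllability of $[\hA,(\hB,\hC)]$ is equivalent to the eigenvector condition in (b): $\hA^\top x=\l x$ with $(\hB,\hC)^\top x=0$ forces $x=0$. It is also equivalent to the rank form $\rank(\l I_n-\hA,\hB,\hC)=n$ for $\l\in\si(\hA)$; the rank is automatically $n$ for $\l\notin\si(\hA)$. This also confirms that the two formulations in (b) agree.

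I expect no genuine obstacle, since all the analytic work sits in the earlier results. The only point needing care is (iii)$\Leftrightarrow$(iv). There one must note that once $V_{n-1}=\dbR^n$, the term $\hC V_{n-1}$ in \autoref{lem:U} becomes $\im\hC$. This dependence on $V_{n-1}=\dbR^n$ is why (iv) must keep that condition alongside the controllability of the augmented pair.
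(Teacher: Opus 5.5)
Your proposal is correct and follows the paper's proof essentially step for step. You get (i)--(iii) from \autoref{thm:MF-controllability-t-T}, \autoref{thm:G(t,T)} and \autoref{prop:SDE-controllability}, then (iii)$\Leftrightarrow$(iv) by noting that $\hC V_{n-1}=\im\hC$ when $V_{n-1}=\dbR^n$, and then (iv)$\Leftrightarrow$(v) via \autoref{prop:SDE-controllability} and the classical Hautus test. The one cosmetic slip is that $P$ does not appear in \autoref{lem:U}, so ``take $P=I_n$'' belongs to \autoref{thm:criterion-U} (as in the paper); your displayed identity for $U_{2n-1}$ is nonetheless exactly right.
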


\begin{proof}
We first show that (i)--(iii) are equivalent.
Let $0<t<T<\i$. By \autoref{thm:MF-controllability-t-T},
system \eqref{eq:system} is exactly controllable on $[t,T]$ if and only if
both $G(t,T)$ and $\bar G(t,T)$ are invertible. By \autoref{thm:G(t,T)},
$$
G(t,T)\text{ is invertible}\q\Longleftrightarrow\q U_{2n-1}=\dbR^n,
$$
while, by \autoref{prop:SDE-controllability},
$$
\bar G(t,T)\text{ is invertible}\q\Longleftrightarrow\q V_{n-1}=\dbR^n.
$$
Therefore, system \eqref{eq:system} is exactly controllable on $[t,T]$
if and only if $U_{2n-1}=V_{n-1}=\dbR^n$.
Since the latter condition does not depend on $t$ or $T$, statements
(i)--(iii) are equivalent.

\ms 

We next prove (iii)$\Leftrightarrow$(iv).
Under the condition $V_{n-1}=\dbR^n$, we may choose $P=I_n$ in
\autoref{thm:criterion-U}. Hence,
$$
U_{2n-1}=\dbR^n \q\Longleftrightarrow\q [\hA,(\hB,\hC)]\text{ is controllable}.
$$
Thus, (iii) and (iv) are equivalent.

\ms 

Finally, by \autoref{prop:SDE-controllability},
$V_{n-1}=\dbR^n$ is equivalent to part (a) of (v).
On the other hand, by the classical Hautus criterion,
$[\hA,(\hB,\hC)]$ is controllable if and only if, for every $\l\in\dbC$ and $x\in\dbC^n$,
$$
\hA^\top x=\l x, \q\hB^\top x=0, \q\hC^\top x=0 \q\Longrightarrow\q x=0,
$$
which is part (b) of (v). 
Hence (iv) and (v) are equivalent, and the proof is complete. 
\end{proof}

Combining \autoref{thm:criterion-U} with \autoref{thm:controllability-t>0} 
makes the initial-time dichotomy explicit.
At $t=0$, exact controllability is equivalent to
$$
U_{2n-1}=\dbR^n,
$$
whereas for every $t>0$ it is equivalent to the stronger condition
$$
U_{2n-1}=V_{n-1}=\dbR^n.
$$
Thus, positive-time exact controllability requires, in addition, exact
controllability of the centered stochastic system. The distinction comes
from the initial sigma-field: $\sF_0$ is trivial, so the initial state at
time zero is necessarily deterministic, while for $t>0$ an
$\sF_t$-measurable initial state may contain an arbitrary centered random
component.

\section{Relationships among the Associated Systems}\label{sec:relationship}

We conclude by comparing exact controllability of the MF-SDE
\eqref{eq:system}, the corresponding non-mean-field SDE
\eqref{eq:system-SDE}, and the deterministic system
\begin{equation}\label{eq:system-ODE}
\left\{\begin{aligned}
\dot x(s) &= Ax(s)+Bu(s), \q s\in[t,T],\\
     x(t) &= x_0.
\end{aligned}\right.
\end{equation}
For \eqref{eq:system-ODE}, controllability is understood in the classical Kalman sense.

\ms 

The following theorem summarizes the exact controllability relations among these 
three systems and also clarifies the role of the initial time for the MF-SDE.

\begin{theorem}\label{thm:relation}
Consider the following statements:
\begin{enumerate}[\rm(i)]
\item The deterministic pair $[A,B]$ is controllable.
\item System \eqref{eq:system-SDE} is exactly controllable on some
      (equivalently, every) interval $[t,T]$ with $0\les t<T<\i$.
\item System \eqref{eq:system} is exactly controllable on some
      (equivalently, every) interval $[t,T]$ with $0<t<T<\i$.
\item System \eqref{eq:system} is exactly controllable on $[0,T]$ for some
      (equivalently, every) $T>0$.
\item The deterministic pair $[\hA,(\hB,\hC)]$ is controllable.
\end{enumerate}
The following implications and equivalences hold:
$$
{\rm(i)}\Longrightarrow{\rm(ii)},\q
{\rm(iii)} \Longleftrightarrow {\rm(ii)}+{\rm(iv)} \Longleftrightarrow {\rm(ii)}+{\rm(v)}.
$$
\end{theorem}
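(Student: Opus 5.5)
The plan is to translate each statement into the subspace conditions of Section 2 and then compare those conditions directly. The dictionary is as follows. By \autoref{prop:SDE-controllability}, (ii) is equivalent to $V_{n-1}=\dbR^n$, and this condition does not depend on $[t,T]$. By \autoref{thm:controllability-t>0}, (iii) is equivalent to $U_{2n-1}=V_{n-1}=\dbR^n$. By \autoref{thm:criterion-U}, (iv) is equivalent to $U_{2n-1}=\dbR^n$; since this does not involve $T$, the phrase ``some (equivalently, every) $T$'' is justified. Finally, (v) is equivalent to $\im\big((\hB,\hC),\hA(\hB,\hC),\ldots,\hA^{n-1}(\hB,\hC)\big)=\dbR^n$.

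For (i)$\Rightarrow$(ii), note that $V_k$ contains $\im(B,AB,\ldots,A^kB)$. This holds because $\cB_k$ contains the column block $A^kB$, as an induction using $\cB_{k+1}=(A\cB_k,C\cB_k)$ shows. Hence Kalman controllability of $[A,B]$ gives $V_{n-1}\supseteq\im(B,AB,\ldots,A^{n-1}B)=\dbR^n$, and therefore (ii).

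For (iii)$\Leftrightarrow$(ii)+(iv), the dictionary gives exactly $U_{2n-1}=V_{n-1}=\dbR^n \Leftrightarrow [V_{n-1}=\dbR^n]\wedge[U_{2n-1}=\dbR^n]$, so this equivalence is immediate. For (ii)+(iv)$\Leftrightarrow$(ii)+(v), assume $V_{n-1}=\dbR^n$. Then we may take $P=I_n$ in \autoref{thm:criterion-U}, whose item (iii) becomes controllability of $[\hA,(\hB,\hC)]$, which is (v). This is the same argument as the step (iii)$\Leftrightarrow$(iv) in \autoref{thm:controllability-t>0}. Alternatively, the whole chain can be read off from the equivalence (ii)$\Leftrightarrow$(iv) of \autoref{thm:controllability-t>0}.

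No step is a genuine obstacle, since everything reduces to earlier results. The only point needing care is the bookkeeping in (i)$\Rightarrow$(ii), namely checking that $A^kB$ actually appears among the columns of $\cB_k$. A further subtlety is that ``some/every'' for (ii) must include $t=0$, which is covered by \autoref{prop:SDE-controllability} for all $0\les t<T$.
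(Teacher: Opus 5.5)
Your proposal is correct and follows essentially the same route as the paper. Both arguments translate (ii), (iii), (iv) into $V_{n-1}=\dbR^n$, $U_{2n-1}=V_{n-1}=\dbR^n$, and $U_{2n-1}=\dbR^n$, use $\im(B,AB,\ldots,A^{n-1}B)\subseteq V_{n-1}$ for (i)$\Rightarrow$(ii), and take $P=I_n$ in \autoref{thm:criterion-U} under (ii); your induction showing that $A^kB$ is a column block of $\cB_k$ simply makes explicit an inclusion the paper states without proof.
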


\begin{proof}
Since $\im(B,AB,\ldots,A^{n-1}B)\subseteq V_{n-1}$,
controllability of $[A,B]$ implies $V_{n-1}=\dbR^n$. 
Hence (i) implies (ii) by \autoref{prop:SDE-controllability}. 

\ms 

Next, by \autoref{thm:controllability-t>0},
$$
{\rm(iii)} \q\Longleftrightarrow\q U_{2n-1}=V_{n-1}=\dbR^n.
$$
On the other hand, \autoref{thm:criterion-U} gives
$$
{\rm(iv)} \q\Longleftrightarrow\q U_{2n-1}=\dbR^n,
$$
while \autoref{prop:SDE-controllability} gives
$$
{\rm(ii)} \q\Longleftrightarrow\q  V_{n-1}=\dbR^n.
$$
Therefore, (iii)$\Leftrightarrow$(ii)+(iv).

\ms 

Finally, under (ii), we have $V_{n-1}=\dbR^n$, so that
$P=I_n$ may be chosen in \autoref{thm:criterion-U}. Thus,
$$
{\rm(iv)} \q\Longleftrightarrow\q [\hA,(\hB,\hC)] \text{ is controllable},
$$
which is precisely statement {\rm(v)}. 
Hence, ${\rm(ii)}+{\rm(iv)}\Leftrightarrow{\rm(ii)}+{\rm(v)}$. 
\end{proof}

Thus, exact controllability of the MF-SDE at a positive initial time is
equivalent to the simultaneous exact controllability of the zero-time
MF-SDE and the corresponding non-mean-field SDE. This relation gives
another interpretation of the initial-time dichotomy established in
\autoref{thm:controllability-t>0}.

\ms 

The implication (i)$\Rightarrow$(ii) in \autoref{thm:relation} is, in general, strict, as demonstrated by the following example.

\begin{example}\label{ex:SDE-not-ODE}
Let
$$
A=0, \q B=\bp1\\0\ep, \q C=\bp0&1\\1&0\ep.
$$
Then
$$
V_1=\im B+C\im B=\dbR^2.
$$
Therefore, by \autoref{prop:SDE-controllability}, the SDE
\eqref{eq:system-SDE} is exactly controllable. On the other hand,
$$
\rank(B,AB)=1,
$$
so the deterministic pair $(A,B)$ is not controllable. Thus, the
converse of the implication in \autoref{thm:relation} does not hold in
general.
\end{example}

At the zero initial time, exact controllability of the non-mean-field 
SDE \rf{eq:system-SDE} and that of the MF-SDE \rf{eq:system} are
independent in general, as the following example shows.

\begin{example}\label{ex:zero-time-independence}
First, consider 
\begin{equation}\label{eq:coefficient-example}
\left\{\begin{aligned}
A&=0, &  B&=\bp1&0\\0&0\ep, & C&=0, \\
\bA&=I_2, & \bB&=\bp0&0\\0&1\ep, & \bC&=I_2.
\end{aligned}\right.
\end{equation}
Then
$$
V_1=\operatorname{span}\{e_1\}\ne\dbR^2, \q \hB=I_2.
$$
Thus, by \autoref{prop:SDE-controllability}, the non-mean-field SDE \rf{eq:system-SDE} 
is not exactly controllable. On the other hand,
$$
U_0=\im\hB=\dbR^2,
$$
and hence $U_{2n-1}=\dbR^2$. Therefore, by \autoref{thm:criterion-U}, 
the MF-SDE \rf{eq:system} is exactly controllable at the zero initial time. 
However, since $V_1\ne\dbR^2$, \autoref{thm:controllability-t>0} shows that 
it is not exactly controllable on any interval with positive initial time. 
Thus, this example also illustrates the initial-time dichotomy.

\ms 

Conversely, consider
$$
A=0, \q B=\bp1&0\\0&0\ep, \q C=\bp0&1\\1&0\ep, \q \bA=I_2, \q \bB=-B, \q \bC=-C.
$$
We have 
$$
V_1=\im B+C\im B=\dbR^2, \q \hB=\hC=0.
$$
Hence, by \autoref{prop:SDE-controllability}, the corresponding non-mean-field SDE 
\rf{eq:system-SDE} is exactly controllable. 
In contrast, $U_0=\{0\}$, and the recursion for $U_k$, together with $\hC=0$, gives
$$
U_k=\{0\}, \q\forall k\in\dbN.
$$
Therefore, by \autoref{thm:criterion-U}, the MF-SDE \rf{eq:system} is not exactly 
controllable at the zero initial time. 
By \autoref{thm:controllability-t>0}, it is not exactly
controllable at any positive initial time either.
\end{example}

For convenience, Figure \ref{fig:summary} summarizes the main controllability
criteria established above and the relationships among the associated systems.
The two upper blocks give simple sufficient conditions. The two middle blocks
describe exact controllability of the MF-SDE at the zero initial time and of
the corresponding non-mean-field SDE, respectively, while the lower block
gives the characterization at positive initial times. Here $P$ denotes any
matrix whose columns form a basis of $V_{n-1}$, and PSD stands for
positive semidefinite. A single arrow in the figure denotes a sufficient
implication. In particular, the two arrows from the lower block indicate that
exact controllability of the MF-SDE at a positive initial time implies both
exact controllability at the zero initial time and exact controllability of
the corresponding non-mean-field SDE.

\begin{figure}[htbp]
\centering
\small
\begin{tikzpicture}[
    scale=0.82,
    transform shape,
    top/.style={
        draw,
        rectangle,
        align=center,
        thick,
        inner sep=6pt,
        minimum height=1.0cm,
        text width=5.4cm
    },
    main/.style={
        draw,
        rectangle,
        align=center,
        thick,
        inner sep=7pt,
        minimum height=2.5cm,
        text width=6.0cm
    },
    positive/.style={
        draw,
        rectangle,
        align=center,
        thick,
        inner sep=8pt,
        minimum height=3.0cm,
        text width=11.8cm
    },
    implication/.style={
        -{Latex[length=4pt]},
        thick
    }
]


\node (M) [top] at (-4.5,5.3)
{\mbox{$[\hA,\hB]$ controllable}\\
\mbox{$\Longleftrightarrow\ \rank(\lambda I_n-\hA,\hB)=n$}\\
\mbox{$\text{for all }\lambda\in\sigma(\hA)$}};

\node (O) [top] at (4.5,5.3)
{\mbox{$[A,B]$ controllable}\\
\mbox{$\Longleftrightarrow\ \rank(\lambda I_n-A,B)=n$}\\
\mbox{$\text{for all }\lambda\in\sigma(A)$}};


\node (Z) [main] at (-4.5,1.2)
{MF-SDE \eqref{eq:system} at the zero initial time\\
is exactly controllable\\[2pt]
\mbox{$\Longleftrightarrow\ U_{2n-1}=\dbR^n$}\\[1.5mm]
\mbox{$\Longleftrightarrow\ [\hA,(\hB,\hC P)]$ controllable}\\[1mm]
\mbox{$\Longleftrightarrow\
\rank(\lambda I_n-\hA,\hB,\hC P)=n$}\\
\mbox{$\text{for all }\lambda\in\sigma(\hA)$}};

\node (S) [main] at (4.5,1.2)
{SDE \eqref{eq:system-SDE} is exactly controllable\\
on some (equivalently, every) interval $[t,T]$\\[1mm]
\mbox{$\Longleftrightarrow\ V_{n-1}=\dbR^n$}\\[1mm]
$\Longleftrightarrow$\
\mbox{$B^\top H\ne0$} for every nonzero PSD\\
eigenmatrix $H$ of $\cL_{(-A,C)}$};


\node (P) [positive] at (0,-4.1)
{MF-SDE \eqref{eq:system} is exactly controllable on some
(equivalently, every) interval\\
\mbox{$[t,T]\ \text{with}\ 0<t<T<\i$}\\[1mm]
\mbox{$\Longleftrightarrow\
U_{2n-1}=V_{n-1}=\dbR^n$}\\[1mm]
$\Longleftrightarrow$ MF-SDE at the zero initial time is exactly controllable\\ 
$+$ SDE \eqref{eq:system-SDE} is exactly controllable\\[1mm]
$\Longleftrightarrow$\
\mbox{$B^\top H\ne0$} for every nonzero PSD eigenmatrix $H$ of
$\cL_{(-A,C)}$ \\ 
$+$ \mbox{$\rank(\lambda I_n-\hA,\hB,\hC)=n$}
\mbox{$\text{for all }\lambda\in\sigma(\hA)$}};


\draw[implication] (M.south) --
node[left,pos=0.5]{\scriptsize sufficient}
(Z.north);

\draw[implication] (O.south) --
node[right,pos=0.5]{\scriptsize sufficient}
(S.north);


\draw[implication]
($(P.north west)!0.35!(P.north)$) -- (Z.south);

\draw[implication]
($(P.north east)!0.35!(P.north)$) -- (S.south);

\end{tikzpicture}

\caption{Summary of the main controllability criteria and their relationships.}
\label{fig:summary}
\end{figure}



\begin{thebibliography}{90}
\addtolength{\itemsep}{-1.0ex}

\bibitem{barbu2023}
\rm V.~Barbu,
\it Exact controllability of Fokker-Planck equations and McKean-Vlasov SDEs,
\rm SIAM J. Control Optim., 61 (2023), pp. 1805-1818.

\bibitem{bi2020}
\rm X.~Bi, J.~Sun, and J.~Xiong,
\it Optimal control for controllable stochastic linear systems,
\rm ESAIM Control Optim. Calc. Var., 26 (2020): 98.

\bibitem{buckdahn2006}
\rm R.~Buckdahn, M.~Quincampoix, and G.~Tessitore,
\it A characterization of approximately controllable linear stochastic differential equations,
\rm in Stoch. Partial Differ. Equ. Appl.--VII, Lect. Notes Pure Appl. Math. 245, Chapman \& Hall/CRC, Boca Raton (2006), pp. 53-60.

\bibitem{chen2024}
\rm C.~Chen and Z.~Yu,
\it Exact controllability for mean-field type linear game-based control systems,
\rm Appl. Math. Optim., 90 (2024): 3.

\bibitem{ehrhardt1982}
\rm M.~Ehrhardt and W.~Kliemann,
\it Controllability of linear stochastic systems,
\rm Systems Control Lett., 2 (1982), pp. 145-153.

\bibitem{goreac2008}
\rm D.~Goreac,
\it A Kalman-type condition for stochastic approximate controllability,
\rm C. R. Math. Acad. Sci. Paris, 346 (2008), pp. 183-188.

\bibitem{goreac2026}
\rm D.~Goreac, J.~Li, and X.~Zhang,
\it Controllability concepts for mean-field dynamics with reduced-rank coefficients,
\rm SIAM J. Control Optim., 64 (2026), pp. 2895-2916.

\bibitem{goreac2014}
\rm D.~Goreac,
\it Controllability properties fo linear mean-field stochastic systems,
\rm Stoch. Anal. Appl., 32 (2014), pp. 280-297.

\bibitem{hautus}
\rm M.~L.~J.~Hautus,
\it Controllability and observability conditions of linear autonomous systems,
\rm Proc. Nederl. AKad. Wetensch., Ser. A, 72 (1969), pp. 443-448.

\bibitem{liu2010}
\rm F.~Liu and S.~Peng,
\it On controllability for stochastic control systems when the coefficient is time-variant,
\rm J. Syst. Sci. Complex., 23 (2010), pp. 270-278.

\bibitem{mahmudov2000}
\rm N.~I.~Mahmudov and A.~Denker,
\it On controllability of linear stochastic systems, 
\rm Int. J. Control, 73 (2000), pp. 144-151.

\bibitem{mahmudov2006}
\rm N.~I.~Mahmudov and M.~A.~McKibben,
\it McKean-Vlasov stochastic differential equations in Hilbert spaces under Carathe\'odory conditions,
\rm Dynam. Systems Appl., 15 (2006), pp. 357-374.

\bibitem{park2008}
\rm J.~Y.~Park, P.~Balasubramaniam, and Y.~H.~Kang,
\it Controllability of McKean-Vlasov stochastic integrodifferential evolution equation in Hilbert spaces,
\rm Numer. Funct. Anal. Optim., 29 (2008), pp. 1328-1346.

\bibitem{peng1994}
\rm S.~Peng,
\it Backward stochastic differential equation and exact controllability of stochastic control systems,
\rm Prog. Nat. Sci., 4 (1994), pp. 274-284.





\bibitem{sun2026}
\rm J.~Sun,
\it Hautus-type criteria for controllability and stabilizability of backward-structured stochastic systems,
\rm arXiv:2605.28242v1.

\bibitem{wang2020}
\rm Y.~Wang and Z.~Yu,
\it On the partial controllability of SDEs and the exact controllability of FBSDEs,
\rm ESAIM Control Optim. Calc. Var., 26 (2020): 68.

\bibitem{wang2017}
\rm Y.~Wang, D.~Yang, J.~Yong, and Z.~Yu,
\it Exact controllability of linear stochastic differential equations and related problems,
\rm Math. Control Relat. Fields, 7 (2017), pp. 305-345.


\bibitem{ye2020}
\rm W.~Ye and Z.~Yu,
\it Exact controllability of linear mean-field stochastic systems and observability inequality for mean-field backward stochastic differential equations,
\rm Asian J. Control, 24 (2022), pp. 237-248.


\bibitem{yu2021}
\rm Z.~Yu,
\it Controllability Gramian and Kalman rank condition for mean-field control systems,
\rm ESAIM Control Optim. Calc. Var., 27 (2021): 30.

\bibitem{zabczyk1981}
\rm J.~Zabczyk,
\it Controllability of stochastic linear systems,
\rm Systems Control Lett., 1 (1981), pp. 25-31.

\end{thebibliography}
\end{document}